\documentclass[11pt]{amsart}

\usepackage[margin=2.5cm]{geometry}
\usepackage{color}
\usepackage{mathrsfs}
\usepackage{mathtools}
\usepackage{amsmath}
\usepackage{amssymb}
\usepackage{enumitem}
\usepackage{bbm}
\usepackage{esint}
\usepackage{nicefrac}
\numberwithin{equation}{section}
\usepackage[colorlinks,citecolor=green,linkcolor=red]{hyperref}

\usepackage[latin1]{inputenc}
\usepackage{tcolorbox}

\newtheorem{theorem}{Theorem}[section]

\newtheorem{lemma}[theorem]{Lemma}
\newtheorem{proposition}[theorem]{Proposition}

\theoremstyle{definition}
\newtheorem{definition}[theorem]{Definition}

\newtheorem{remark}[theorem]{Remark}

\newcommand{\N}{\mathbb{N}}

\newcommand{\R}{\mathbb{R}}

\newcommand{\sfd}{{\sf d}}

\newcommand{\rr}{\mathbb R}
\newcommand{\restr}[1]{\lower3pt\hbox{$|_{#1}$}}

\newcommand{\eps}{\varepsilon}  
\newcommand{\nchi}{{\raise.3ex\hbox{$\chi$}}}

\newcommand{\fr}{\hfill$\blacksquare$}  

\newcommand{\Leb}[1]{{\mathcal L}^{#1}}  

\newcommand{\LIP}{\mathrm{LIP}}

\newcommand{\lip}{\mathrm{lip}}
\newcommand{\esssup}{{\rm ess}\sup}
\newcommand{\essinf}{{\rm ess}\inf}
\newcommand{\diam}{\mathrm{diam}}

\newcommand{\RCD}{{\sf RCD}}
\newcommand{\CD}{{\sf CD}}

\newcommand{\mm}{\mathfrak m}

\renewcommand{\limsup}{\varlimsup}
\renewcommand{\liminf}{\varliminf}
\renewcommand{\d}{{\rm d}}
\newcommand{\X}{{\rm X}}

\newcommand{\Xdm}{(\X,\sfd,\mm)}

\newcommand{\vol}{{\rm Vol}}

\newcommand{\Per}{{\rm Per}}

\newcommand{\la}{{\big\langle}}                  
\newcommand{\ra}{{\big\rangle}}

\renewcommand{\phi}{\varphi}

\newcommand{\cS}{\mathcal{S}}
\renewcommand{\ss}{\mathbb S}

\newcommand{\mres}{\mathbin{\vrule height 1.6ex depth 0pt width 0.13ex\vrule height 0.13ex depth 0pt width 1.3ex}}

\title[Quantitative stability in distribution for the Sobolev inequality]{Quantitative stability in distribution for the Sobolev inequality under curvature dimension condition}

\author[Max Fathi]{Max Fathi} 
\address{Universit\'e Paris Cit\'e and Sorbonne Universit\'e, CNRS, Laboratoire Jacques-Louis Lions and Laboratoire de Probabilit\'es, Statistique et Mod\'elisation, F-75013 Paris, France \newline
and DMA, \'Ecole normale sup\'erieure, Universit\'e PSL, CNRS, 75005 Paris, France \newline
and Institut Universitaire de France}
\email{mfathi@lpsm.paris}

\author[Ivan Yuri Violo]{Ivan Yuri Violo}
\address{Universit\'a di Pisa, Dipartimento di Matematica, Largo Bruno Pontecorvo 5,
56127 Pisa, Italy}
\email{{ivanyuri.violo@dm.unipi.it}}
\newcommand{\Ig}{\mathcal I^{\flat}_g}

\usepackage[numbers,sort&compress]{natbib}
\let\oldthebibliography\thebibliography
\renewcommand{\thebibliography}[1]{%
  \oldthebibliography{#1}%
  \footnotesize 
}

\makeatletter
\def\@tocline#1#2#3#4#5#6#7{\relax
  \ifnum #1>\c@tocdepth 
  \else
    \par \addpenalty\@secpenalty\addvspace{#2}%
    \begingroup \hyphenpenalty\@M
    \@ifempty{#4}{%
      \@tempdima\csname r@tocindent\number#1\endcsname\relax
    }{%
      \@tempdima#4\relax
    }%
    \parindent\z@ \leftskip#3\relax \advance\leftskip\@tempdima\relax
    \rightskip\@pnumwidth plus4em \parfillskip-\@pnumwidth
    #5\leavevmode\hskip-\@tempdima
      \ifcase #1
       \or\or \hskip 1em \or \hskip 2em \else \hskip 3em \fi%
      #6\nobreak\relax
    \dotfill\hbox to\@pnumwidth{\@tocpagenum{#7}}\par
    \nobreak
    \endgroup
  \fi}
\makeatother

\begin{document}

\begin{abstract}
   The goal of this note is to investigate quantitative stability properties of the critical Sobolev inequality in   $\CD(N-1,N)$ metric measure spaces. Assuming that the optimal constant for the  inequality is almost the same as the one of the round sphere, we show that the cumulative  distribution of  any almost extremal function is  close, in Wasserstein distance, to the one of an Aubin-Talenti bubble on the round  sphere. We obtain similar results for the log Sobolev inequality and the spectral gap under various curvature and dimension assumptions. In all cases we obtain a quantitative stability with sharp exponent. 
\end{abstract}
    
	\maketitle

\tableofcontents

	\section{Introduction}

    We investigate a particular form of stability for spaces with positive Ricci curvature and almost maximal sharp Sobolev constant. The goal is to show that under certain assumptions on curvature and dimension (in the sense of Bakry and Emery), if the sharp Sobolev constant is close to that of the (suitably scaled) sphere, then the pushforward of the normalized volume measure by an almost extremal function is close (in Wasserstein distance) to what happens for the sphere (see Theorem \ref{thm:main result sobolev} for a precise statement). This form of stability was investigated for various functional inequalities \cite{CF25, Serres21,Serres23}, including spectral gap bounds under curvature-dimension conditions \cite{FGS24}. We start with a brief description of background material on Sobolev inequalities and curvature-dimension conditions, and we refer to \cite{Nobili25,BakryGentilLedoux14} for a more comprehensive survey. 

Any Riemannian manifold $(M,g)$ of dimension $N\ge 3$ and Ricci curvature bounded below by $N-1$ satisfies the following sharp Sobolev inequality 
  \begin{equation} \label{statement_sobolev}
    \|u\|_{L^{2^*}(\mu)}^2\le A_N\||\nabla u|\|_{L^2(\mu)}^2+\|u\|_{L^2(\mu)}^2, \quad \forall u \in W^{1,2}(M),
    \end{equation}
    where $2^*\coloneqq\frac{2N}{N-2}$ and $\mu$ is the renormalized volume measure and $A_N := \frac{2^*-2}{N}$ \cite{Ilias83,BL96,dupaigne2023sobolev,BakryGentilLedoux14}. Recall that any such manifold has diameter at most $\pi$  by Bonnet Myers theorem and thus has finite volume.  The constant $A_N$ is sharp, in that it is the sharp constant for the round sphere $\mathbb S^N$ of unit radius \cite{Aubin76a} and equality is achieved by the family of \textit{Aubin-Talenti bubbles} \cite{Aubin76f,Talenti}:
\begin{equation}\label{eq:AT}
    U_{a,b,x_0}(x) = (a-b \cos(\sfd_{\mathbb S^N}(\cdot, x_0)))^{1-N/2}; \hspace{3mm} a >0,\, b \in (-a,a) ,\, x_0\in \mathbb S^N,
\end{equation}
where $\sfd_{\mathbb S^N}$ denotes the geodesic distance. Note that $ U_{a,b,x_0}$ are either strictly positive or strictly negative radial functions.

For an arbitrary  $N$-dimensional manifold with Ricci curvature bounded below by $N-1$ on the other hand, inequality \eqref{statement_sobolev} might hold  with a strictly smaller constant.  As a matter of fact a natural question arises as  to whether the round sphere is the unique manifold, under the same curvature assumptions, for which $A_N$ can not be improved. This is reminiscent of Cheng's maximal diameter theorem \cite{Cheng1975}, which characterizes the sphere as the unique smooth manifold with Ricci curvature greater than $ N-1$ and diameter $\pi.$ It turns out that the same holds for the Sobolev inequality. More precisely, it was shown in \cite{NV22} that if  $A_N$  is the minimal constant such that \eqref{statement_sobolev} holds in $M$ then $M$ is isometric to a round sphere with unit radius.

With the full understanding of the optimality of \eqref{statement_sobolev} and the characterization of the equality case we can investigate its stability. 
Suppose that $(M,g)$ is $N$-dimensional, $N\ge 3$, with Ricci curvature bounded below by $N-1$ and that there exists a non-constant function $u$, called \textit{almost extremizer},   such that
$$\|u\|_{L^{2^*}(\mu)}^2\ge (1-\eps)A_N \||\nabla u|\|_{L^2(\mu)}^2+\|u\|_{L^2(\mu)}^2,$$
for some small $\eps> 0$. What can we deduce?  There are  mainly two directions that can be explored:
 \begin{enumerate}[label=\roman*)]
     \item \textit{Geometric stability}: Does the existence of an almost extremizer imply that the ambient manifold $M$ is close, in an appropriate sense, to the round sphere?
     \item \textit{Functional stability}: Do almost extremizers look like Aubin-Talenti bubbles?
 \end{enumerate}
 For the first question,  the answer is negative. In fact it was shown in \cite{NV22} that $M$ must instead be close--in the Gromov-Hausdorff topology--to a \emph{spherical suspension}, which is roughly speaking a non-smooth generalization of the round sphere. This stems from the fact that the compactification of the class of $N$-dimensional smooth manifolds with Ricci curvature bounded below by $N-1$ includes metric spaces with singularities. Within this broader class, the spaces that saturate the Sobolev inequality \eqref{statement_sobolev} are precisely the spherical suspensions of diameter $\pi$, among which the round sphere is the unique smooth representative. This is an instance of a more general phenomenon where extremal cases for functional inequalities,  under a Ricci curvature lower bound, are realized by singular metric-measure spaces. This perspective originates in the work of  Anderson \cite{Anderson90} about manifold with almost maximal diameter and was significantly developed in the subsequent works of Cheeger and Colding about Ricci limits \cite{CC96,CC97}. 

Concerning functional stability, an affirmative answer was given in \cite{NV24}, showing that almost extremizers are close to Aubin-Talenti bubbles. More precisely it is shown in \cite{NV24} that any almost extremizer is close in $W^{1,2}$-norm to a function $U_{a,b,x_0}$, given by the same expression in \eqref{eq:AT} where we replace $\sfd_{\mathbb S^N}(\cdot,x_0)$ with the geodesic distance from some point $x_0\in M$.  The underlying principle for this positive result is that, even if equality in \eqref{statement_sobolev} can be achieved in singular spherical suspensions, the extremal functions on these spaces are still bubbles  defined using the distance function from  one of the tips of the suspension.

The main drawback of these results however is that they are only \textit{qualitative}, in the sense that they lack  quantified estimates. This is not a technical point but rather an intrinsic limitation of strategy which crucially relyies on compactness arguments.

The aim of the present work is to obtain a first quantitative version of the stability for the sharp  Sobolev inequality \eqref{statement_sobolev}.  To do so we will use an approach which is, in some sense, an averaged version of the geometric  and functional versions  of the stability. 

Let $\mu_N$ be the normalized volume measure on $\mathbb S^N$ (i.e scaled so that $\mu_N(\mathbb S^N)=1$), $N\ge 3,$ and let $u$ be a non-constant positive extremal function such that $\|u\|_{L^{2^*}(\mu_N)}=1.$ Consider  the push forward of  $\mu_N$ via $u$:
\[
\nu\coloneqq u_\sharp\mu_N \in \mathcal P(\rr).
\]
All the possible outcome measures $\nu$ are easily computed and  coincide with the following one-parameter family of probability measures on the real line 
\begin{equation}\label{eq:muA}
      \left\{ \mu_A\coloneqq \frac{t^{-\frac{N}{N-2}}}{c_{A,N}}\left[(A^\frac{2}{N-2}-t^{\frac{2}{2-N}})(t^{\frac{2}{2-N}}-A^\frac{2}{2-N})  \right]^\frac{N-2}{2} \d t \restr{[A^{-1},A]}  
    \right\}_{A \in (1,\infty)},
\end{equation}
where $c_{A,N}>0$ is the  normalization constant enforcing unit total mass. 
In fact, the same holds in any spherical suspensions, with the same class $\{\mu_A\}_{A\in(1,\infty)}$ and where $\mu_N$ must be replaced by the ambient reference measure. 

Let now $M$ be a $N$-dimensional Riemannian manifold with Ricci curvature bounded below by $N-1$ and consider any almost extremal function $u$. Our goal will be to quantify how far $u_\sharp\mu $  is from the class  $\{\mu_A\}_{A\in(1,\infty)}$, where  $\mu$ is the renormalized volume measure in $M$ and the distance is measured  using the Wasserstein distance from optimal transport. 

In fact we will prove our main result for much more general spaces than Riemannian manifolds. As mentioned in the above discussion, the natural setting is the one of non-smooth metric spaces.  More precisely we consider  $\CD(K,N)$ and $\RCD(K,N)$ metric measure spaces having a weak notion of Ricci curvature bounded below by $K\in \rr$ and dimension bounded above by $N\in (-\infty,\infty]$ (here $N$ might be non-integer), see Section \ref{subsect_rcd}.  It is worth mentioning that the sharp Sobolev inequality \eqref{statement_sobolev}  holds also in this more general setting and with the same constant \cite{CM17b}. The reader interested only in the smooth case can replace   in the main statements below `$\CD(K,N)$' or `$\RCD(K,N)$'  by `smooth Riemannian manifold' with dimension bounded above by $N$ (when $N$ is non-negative) and Ricci curvature bounded below by $K$ or, more generally, by `weighted smooth Riemannian manifold'  with $N$-Bakry-\'Emery  Ricci tensor bounded below by $K$, see Section \ref{subsect_rcd}.

We are now ready to state our first main result.

\begin{theorem}[Stability of the sharp Sobolev inequality on $\CD(N-1,N)$ spaces]\label{thm:main result sobolev}
    Let $\Xdm$ be an essentially non-branching $\CD(N-1,N)$ space with $\mm(\X)=1$ and $N\in \mathbb{N}$ with $N\ge 3$. Set $2^*\coloneqq \frac{2N}{N-2}$.  Suppose that $u\in W^{1,2}(\X)$ satisfies 
    \begin{equation}\label{eq:reverse sobolev}
         \int u^{2^*}\d \mm =1, \quad \|u\|_{L^{2^*}(\mm)}^2\ge (1-\eps)A_N\||\nabla u|\|_{L^2(\mm)}^2+\|u\|_{L^2(\mm)}^2,
    \end{equation}
    for some $\eps>0$. Then it holds,  up to a multiplication of $u$ by $-1$,
    \begin{equation}\label{eq:eq:main result sobolev}
       \inf_{A> 1}   W_{2^*}(u_\sharp \mm, \mu_{A}) \leq C \sqrt{N\eps},
    \end{equation}
    where $C>0$ is an absolute constant and $\{\mu_A\}_{A>1}$ is the family of probability measures  in \eqref{eq:muA}.
\end{theorem}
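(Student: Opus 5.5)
We reduce the problem to a one-dimensional comparison through the localization (needle decomposition) technique available on essentially non-branching $\CD(K,N)$ spaces. The key tool is the Cavalletti--Mondino localization paradigm, which already underlies the proof of the sharp Sobolev inequality in \cite{CM17b}.

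\textbf{Step 1: reduction to a one-dimensional functional.} Since $|u|$ is again an almost extremizer and has the same $2^*$-norm, we may assume up to sign that $u\ge0$, after a preliminary argument showing that an almost extremizer cannot change sign substantially. Next we pass from $u$ to a monotone rearrangement. Define the distribution $\nu=u_\sharp\mm$ and the decreasing profile $u^*:[0,\pi]\to\rr$ on the model space $([0,\pi],|\cdot|,\sigma_N)$ with $\d\sigma_N=c_N\sin^{N-1}(t)\,\d t$, so that $(u^*)_\sharp\sigma_N=\nu$. The isoperimetric inequality of L\'evy--Gromov type on $\CD(N-1,N)$ spaces (Cavalletti--Mondino), together with the coarea formula, gives the P\'olya--Szeg\H{o} inequality $\||\nabla u^*|\|_{L^2(\sigma_N)}\le \||\nabla u|\|_{L^2(\mm)}$. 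Since all $L^p$ norms are preserved, $u^*$ satisfies the same reverse Sobolev inequality \eqref{eq:reverse sobolev} on the one-dimensional model space $([0,\pi],\sigma_N)$, which is itself a $\CD(N-1,N)$ space. The Wasserstein distance in \eqref{eq:eq:main result sobolev} depends only on $\nu=(u^*)_\sharp\sigma_N$. The problem is therefore entirely one-dimensional: we must show that almost extremizers of the Sobolev inequality on $([0,\pi],\sin^{N-1}\d t)$ have pushforward close to some $\mu_A$. The extremizers here are exactly the profiles $(a-b\cos t)^{1-N/2}$, and their pushforwards are precisely the family in \eqref{eq:muA}.

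\textbf{Step 2: one-dimensional quantitative stability.} On the model we use the standard change of variables that conformally identifies $([0,\pi],\sin^{N-1})$ with the radial part of $\mathbb S^N$, and then, via stereographic projection, with radial functions on $\rr^N$. For radial functions we can invoke a quantitative Bianchi--Egnell stability, which in the radial case reduces to an explicit spectral gap of the linearized operator around bubbles. This gives $\inf_{a,b}\|u^*-U_{a,b}\|_{W^{1,2}(\sigma_N)}\le C\sqrt{N\eps}$, with the normalization of the deficit as in \eqref{eq:reverse sobolev}. Equivalently, and perhaps more cleanly, we expand the deficit around the optimal bubble and use the spectral gap of $-\Delta$ on the weighted interval. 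The explicit first nontrivial eigenvalue $N$ of the ultraspherical operator gives the coercivity, and the dimension dependence of the constants should produce exactly the factor $\sqrt N$.

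\textbf{Step 3: from $W^{1,2}$ closeness to $W_{2^*}$.} Since $u^*$ and $U_{a,b}$ live on the same probability space $\sigma_N$, the map $(u^*,U_{a,b})_\sharp\sigma_N$ is a coupling of $\nu$ and $\mu_A$. This yields
\[
W_{2^*}(\nu,\mu_A)\le\|u^*-U_{a,b}\|_{L^{2^*}(\sigma_N)}\le C_N\|u^*-U_{a,b}\|_{W^{1,2}(\sigma_N)},
\]
by the Sobolev inequality on the model. The normalization $\int u^{2^*}=1$ forces $A=A(a,b)$ up to a harmless renormalization error of the same order.

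\textbf{Main obstacle.} The main obstacle is Step 2 with a constant that is absolute, up to $\sqrt N$, and that is global rather than merely local. The local expansion only works once $u^*$ is known to be close to the bubble manifold. To handle this we would show, via concentration-compactness on the one-dimensional model, that the infimum over the bubble family is attained; for the large-deficit regime we would note that the right-hand side of \eqref{eq:eq:main result sobolev} then exceeds a trivial bound. Tracking the dimension dependence carefully through the stereographic change of variables is where most of the care is needed.
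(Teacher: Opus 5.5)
\textbf{Gap in Step~2: the one-dimensional stability estimate.} Your overall architecture is the paper's. You rearrange $u$ onto $([0,\pi],\mm_N)$ via the P\'olya--Szeg\H{o} principle, which rests on the Cavalletti--Mondino isoperimetric inequality (no needle decomposition of $u$ itself is needed). You use $u_\sharp\mm=(u^*)_\sharp\mm_N$ and get stability for the monotone profile $u^*$ through $\mathbb S^N$ and stereographic projection. You then bound $W_{2^*}$ via the coupling $(u^*,v_{a,b})_\sharp\mm_N$ and the Sobolev inequality, and renormalize to $|b|=\sqrt{a^2-1}$. The gap is Step~2, which is the real content of the theorem. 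The statement requires an \emph{absolute} constant $C$, and local Bianchi--Egnell analysis combined with concentration-compactness cannot deliver one. Compactness only says that for each $N$ there is some $\eps_0(N)>0$ such that a deficit below $\eps_0(N)$ puts $u^*$ in the neighbourhood of the bubble family where the second-order expansion is valid. It gives no information on the size of $\eps_0(N)$. (Attainment of the infimum is not the issue; the missing piece is a quantitative link between small deficit and small distance.) For $\eps\ge\eps_0(N)$ you only have the trivial bound $\inf_{A>1}W_{2^*}(u_\sharp\mm,\mu_A)\le 2$, which forces $C\gtrsim (N\eps_0(N))^{-1/2}$, an uncontrolled function of $N$. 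So your route proves the inequality only with a non-explicit $C(N)$. This is exactly the weaker version the paper notes follows from the Bianchi--Egnell compactness argument, and it would not even need $N\in\mathbb N$. Careful bookkeeping through the stereographic projection cannot repair this.

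\textbf{Missing ingredient.} What you need is a \emph{global} quantitative Sobolev inequality with explicit constant: the Dolbeault--Esteban--Figalli--Frank--Loss theorem on $\rr^N$. It states $\|\nabla f\|_2^2-S_N\|f\|_{2^*}^2\ge\frac{\beta}{N}\inf_g\|\nabla f-\nabla g\|_2^2$, where $S_N$ is the sharp Euclidean constant, $\beta$ is universal, and $g$ ranges over all bubbles. The paper applies it to the stereographic image $f$ of $u^*\circ\arccos(x_{N+1})$. After flipping the sign so that $\|f^+\|_{2^*}\ge\frac12\|f\|_{2^*}$, and exchanging the poles if needed, $f^+$ is radially non-increasing. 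The paper then restricts the infimum to bubbles centred at the origin, using the identity $\|\nabla f-\nabla g\|_2^2=\|\nabla f\|_2^2+\|\nabla g\|_2^2-c_g\int f g^{2^*-1}$ and the Hardy--Littlewood inequality. Centred bubbles pull back exactly to the one-dimensional $v_{a,b}$. This is where $N\in\mathbb N$ enters.

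\textbf{Smaller points.} In the local analysis, the eigenvalue $N$ of the ultraspherical operator corresponds to directions tangent to the bubble family and is degenerate for the second variation. The coercivity comes from the gap up to $\lambda_2=2(N+1)$. The bound of order $\sqrt{N\eps}$ holds for the weighted quantity $\|w\|_2^2+A_N\|w'\|_2^2$, which is precisely what the Sobolev inequality uses to control $\|w\|_{2^*}^2$; it does not hold for the unweighted $W^{1,2}$ norm. Finally, your preliminary sign reduction is unnecessary, since the one-dimensional estimate handles sign-changing $u^*$ up to a global sign. If you keep it, applying the sharp Sobolev inequality separately to $u^+$ and $u^-$ gives $\min(\|u^+\|_{2^*}^2,\|u^-\|_{2^*}^2)\lesssim N\eps$, which suffices.
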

The exponent $1/2$ in $\eps$ is sharp as shown in Example 2 of Section \ref{sec:opt}.
Essentially non-branching  is a technical condition which ensures that the ambient space does not posses too many pairs of branching geodesics (see Definition \ref{def:nonbr}). It is trivially satisfied on Riemannian manifolds and  holds in any $\RCD$ space \cite{RajalaSturm12}. This assumption is not too restrictive, as the  Sobolev inequality \eqref{statement_sobolev} itself is known to hold only for  essentially non-branching spaces.

The above result is not merely a weaker version of a functional stability. As a matter of fact  Theorem \ref{thm:main result sobolev} would not follow directly from an estimate on $\|u-U_{x_0,a,b}\|_{L^2(\mu)}$ (where $U_{x_0,a,b}$ is a bubble defined using the geodesic distance in $M$). Inequality \eqref{eq:eq:main result sobolev} should be instead interpreted as a joint averaged stability information about both the function $u$ and the reference measure $\mm$ together.  The main advantage of this approach is that enables us to obtain \eqref{eq:eq:main result sobolev} with explicit and simple dependence in $N$ and the sharp behavior $\sqrt \eps.$

In \cite[Theorem 1.4]{NV25polya}, under the same assumptions as Theorem \ref{thm:main result sobolev}, a quantitative statement of more geometric flavor was shown by proving that $(\pi-\diam(\X))\le C(N)\eps^{1/N}$ with a dimensional constant $C(N).$
A form of quantitative stability  in Riemannian manifolds for a  Sobolev inequality related to \eqref{statement_sobolev}  was also obtained in \cite{NP25}. The main difference is that the authors consider almost minimizers with respect to a fixed manifold $M$ with no curvature assumptions and estimate the distance from the non-explicit family of minimizers for $M$, rather than from Aubin-Talenti bubbles. Moreover the  constants appearing in the main result of \cite{NP25} depend implicitly on $M$, while here the dependence is  only on the dimension and the lower bound on the Ricci curvature in explicit form.

\begin{remark}
    From our argument we deduce  Theorem \ref{thm:main result sobolev}  also for \textit{non integer} $N>2$,   but without the explicit  dependence on $N$ on the constant in \eqref{eq:eq:main result sobolev}. The reason is that the $W^{1,2}$-strong stability result for the Sobolev inequality, with explicit dependence on $N$, in the weighted one-dimensional model space $I_N$  is known only for $N\in \mathbb N.$ 
    The difficulty is that the argument in the recent \cite{DEFFL22} relies strongly on the richer symmetries of $\rr^N$ for $N>1$ and is not immediately  adapted to the weighted one-dimensional interval. 
    \fr
\end{remark}

 Our strategy also extends to the logarithmic Sobolev inequality (LSI), which is the  analog of the Sobolev inequality as the upper bound on the dimension becomes infinite. Before giving the statement we recall the sharp inequality. In any $\RCD(1,\infty)$ space $\Xdm$ with $\mm(\X)=1$ it holds
\begin{equation}\label{eq:log sob}
   \frac12 \int u^2 \log (u^2)\d \mm\le \int |\nabla u|^2 \d \mm , \quad \text{for all $u\in W^{1,2}(\X)$,  $\|u\|_{L^2(\mm)}=1$,}
\end{equation}
see \cite{BakryGentilLedoux14} and references therein.
In the case of $\rr^N$ endowed with the standard Gaussian measure $\gamma$, this inequality was established by Gross \cite{Gross}, and its extension to positively curved manifolds goes back to Bakry and Emery \cite{BE85}. The inequality is sharp, and rigidity is characterized: if \eqref{eq:log sob} is attained for some function $u$ then the space \textit{splits off a Gaussian factor}, which means that
$\Xdm \simeq (\rr,\sfd_e,\gamma) \otimes (Y,\sfd_Y,\mm_Y), $
and, up to a sign, $u(t,y)=e^{-b^2+bt}$ for some $b \in \rr\setminus \{0\}$  (see \cite{OT20,Han21,ChengZhou17}). In $\rr^n$ this simply means that $u(x)=e^{b\cdot x-|b|^2}$ for some $b\in \rr^n$ (see also \cite{CARLEN1991194}). As for the Sobolev inequality, we can compute the push forward measure for these functions
\[
\nu=u_\sharp\mm=(e^{-b^2+bt})_\sharp(\gamma(t)\d t  \otimes \mm_Y(y))=:L_\sigma.
\]
This law $L_{\sigma}$ is known as the the Lognormal probability distribution of parameters $(-\sigma^2,\sigma^2)$, and its density is proportional to
$\frac1t {\rm exp}\left(-{\frac{(\log (t) + b^2)^2}{2b^2}}\right)$, $t>0$.
We can now state our stability result.

\begin{theorem}[Stability of the sharp LSI on $\RCD(1,\infty)$ spaces]\label{thm:main result log sobolev}
    Let $\Xdm$ be an $\RCD(1,\infty)$ space with $\mm(\X)=1.$ Suppose that $u\in W^{1,2}(\X)$ satisfies 
    \[
    \int u^2\d \mm =1; \quad  \int |\nabla u|^2 \d \mm -\frac12 \int u^2 \log (u^2) \d \mm \le \eps
    \]
   for some $\eps>0$. Then it holds,    up to a multiplication of $u$ by $-1$,
    \begin{equation}\label{eq:log sob stab}
      \inf_{{\sigma>0} } W_2(u_\sharp \mm, L_{\sigma} ) \le  C\sqrt{\eps},
    \end{equation}
    where $C>0$ is an absolute constant and $L_{\sigma}$ is the Lognormal distribution of parameters $(-\sigma^2,\sigma^2)$. 
\end{theorem}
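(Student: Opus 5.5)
We reduce the LSI deficit on $\X$ to a one-dimensional Gaussian problem by a rearrangement that preserves the pushforward law, and then use a sharp stability estimate for the one-dimensional Gaussian LSI. The rearrangement tool is the Gaussian isoperimetric inequality on $\RCD(1,\infty)$ spaces, due to Bakry--Ledoux and extended to the nonsmooth setting by Ambrosio--Mondino: for every Borel $E\subset\X$, $\Per(E)\ge \mathcal I_\gamma(\mm(E))$, where $\mathcal I_\gamma=\varphi\circ\Phi^{-1}$ is the Gaussian isoperimetric profile.

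\textbf{Step 1: reduction to $u\ge 0$.} Since $|u|$ has the same gradient and the same entropy as $u$, the deficit of $|u|$ is at most $\eps$. Moreover $W_2(u_\sharp\mm,|u|_\sharp\mm)^2\le 4\int_{\{u<0\}}u^2\,\d\mm$. This mass is controlled by the deficit, after possibly replacing $u$ by $-u$. Indeed, if both $u^+$ and $u^-$ carried a non-negligible fraction $\theta$ of the $L^2$ mass, the entropy would split as
\[
\textstyle\int u^2\log u^2=\int (u^+)^2\log (u^+)^2+\int (u^-)^2\log (u^-)^2 .
\]
The homogeneity defect of the LSI applied to each piece would then produce a gain of order $\theta\log(1/\theta)$ plus $(1-\theta)\log\frac1{1-\theta}$, which is at least $c\,\theta$. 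Hence $\int (u^-)^2\le C\eps$, and this step costs only $C\sqrt\eps$ in $W_2$.

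\textbf{Step 2: rearrangement.} For $u\ge0$, let $u^*:\rr\to[0,\infty)$ be the nondecreasing rearrangement with respect to $\gamma$, that is, $\gamma(\{u^*>t\})=\mm(\{u>t\})$. By construction $(u^*)_\sharp\gamma=u_\sharp\mm$, so both the $L^2$ norm and the entropy are preserved. By the coarea formula and the isoperimetric inequality above, $\int|\nabla u^*|^2\d\gamma\le\int|\nabla u|^2\d\mm$ (a P\'olya--Szeg\H{o} inequality). Therefore $u^*$ is a nonnegative function on $(\rr,\gamma)$ with $\|u^*\|_{L^2(\gamma)}=1$ and LSI deficit at most $\eps$. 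Since the target quantity $W_2(u_\sharp\mm,L_\sigma)$ depends only on the law, it suffices to prove the theorem for $(\rr,|\cdot|,\gamma)$ and monotone $u^*$.

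\textbf{Step 3: one-dimensional stability (the main obstacle).} We need
\[
\inf_b \|u^*-e^{bt-b^2/2}\|\le C\sqrt{\eps}
\]
in a norm strong enough to control $W_2$ of the pushforwards. Note that $W_2(f_\sharp\gamma,g_\sharp\gamma)\le\|f-g\|_{L^2(\gamma)}$ via the coupling $(f,g)_\sharp\gamma$, so an $L^2(\gamma)$ bound suffices.

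Here we would invoke a known sharp stability result for the Gaussian LSI with deficit-to-distance exponent $1/2$. Candidates are the results of Fathi--Indrei--Ledoux and Bobkov--Gozlan--Roberto--Samson under a Poincar\'e or moment control; in one dimension, Eldan--Lehec--Shenfeld, or the recent $W^{1,2}$ stability of Brigati--Dolbeault--Simonov. The issue is that general stability of the Gaussian LSI fails in $L^2$ without extra assumptions. The plan is to exploit the monotonicity of $u^*$ and the one-dimensionality, and, if needed, a truncation argument: cut the tails where $u^*$ is large, controlling their contribution through the entropy bound. With such a result in hand, the optimizer $e^{bt-b^2/2}$ has law $L_{\sigma}$ with $\sigma=|b|$, which gives \eqref{eq:log sob stab}.

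If the cited results only give a weaker metric, the fallback is to work directly with $W_2$ on the level of $\log u^*$. Writing $u^*=e^{\psi/2}$ turns the deficit into a quantity controlling the deviation of $\psi$ from affine functions in $L^2(e^\psi\gamma)$.
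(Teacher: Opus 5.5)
\textbf{Steps 1 and 2 are sound; Step 3, the only quantitative step, is missing.} Your Steps 1--2 follow the paper's architecture. You rearrange into $(\mathbb R,|\cdot|,\gamma)$ using Gaussian isoperimetry and P\'olya--Szeg\H{o}, note that $(u^*)_\sharp\gamma=u_\sharp\mm$ so the deficit does not increase, and bound $W_2$ by an $L^2(\gamma)$ distance through the coupling $(u^*,v)_\sharp\gamma$. Step 1 is correct: splitting the entropy and applying the LSI to $u^\pm$ with their masses gives $\min\{\int (u^+)^2,\int (u^-)^2\}\le C\eps$. It is, however, superfluous, since the rearrangement is defined for signed functions and the one-dimensional estimate below allows $c\in\mathbb R$.

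The genuine gap is Step 3, which carries all the quantitative content and which you never establish. You list candidate results, assert that stability ``fails in $L^2$ without extra assumptions'', and sketch a truncation/monotonicity plan and a fallback in terms of $\psi=2\log u^*$, none of which is carried out. The missing ingredient is \cite[Corollary 1.2]{DEFFL22}, which the paper uses directly. It states that there is an absolute $\beta>0$ such that, for every $v\in W^{1,2}(\gamma)$, with no sign, monotonicity or moment constraint,
\[
\beta\inf_{b,c\in\mathbb R}\int|v-ce^{bt}|^2\,\mathrm{d}\gamma\le\int|v'|^2\,\mathrm{d}\gamma-\frac12\int v^2\log\Big(\frac{v^2}{\|v\|_{L^2(\gamma)}^2}\Big)\mathrm{d}\gamma .
\]
So your claim that $L^2$-stability fails in general is wrong for this formulation, namely the $L^2(\gamma)$ distance of $v$ to the family $\{ce^{bt}\}$. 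A self-contained one-dimensional proof along the lines you suggest would have to reproduce this nontrivial estimate, including the lack of compactness as $b\to\pm\infty$; as written, nothing in Step 3 is proved.

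\textbf{Step 3 also targets the wrong family.} We have $\|e^{bt-b^2/2}\|_{L^2(\gamma)}=e^{b^2/2}$, and the law of $e^{bt-b^2/2}$ is the lognormal distribution with parameters $(-b^2/2,b^2)$, not $L_{|b|}$. For the exact extremal $u^*=e^{t-1}$, the $L^2(\gamma)$ distance to $\{e^{bt-b^2/2}\}_{b}$ is bounded below by a positive constant, so the inequality you aim for is false as stated. The correct comparison functions are $e^{bt-b^2}$, which have unit $L^2(\gamma)$ norm and law $L_{|b|}$.

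Passing to them needs the short renormalization step the paper performs. Suppose $\|u^*-ce^{bt}\|_{L^2(\gamma)}\le\delta$. Since $\|u^*\|_{L^2(\gamma)}=1$ and $\|ce^{bt}\|_{L^2(\gamma)}=|c|e^{b^2}$, the reverse triangle inequality gives $\big||c|e^{b^2}-1\big|\le\delta$. After replacing $u$ by $-u$ if needed so that $c>0$, this yields $\|u^*-e^{bt-b^2}\|_{L^2(\gamma)}\le 2\delta$. The value $b=0$ can be excluded because $ce^{bt}\to c$ in $L^2(\gamma)$ as $b\to0$. The coupling argument then gives $W_2(u_\sharp\mm,L_{|b|})\le 2\sqrt{\eps/\beta}$.
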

 To our knowledge, such stability statement has previously only been established in the restricted setting of the Euclidean space endowed with a uniformly log-concave probability measure \cite{CF20}. For functional stability results of the LSI in Euclidean spaces see \cite{brigati2026logarithmic} and references there in. Also here the power $1/2$ for $\eps$ is sharp (see Example 1 in Section \ref{sec:opt}).

Our method  also applies to the spectral gap inequality for the Laplacian under various curvature-dimension assumptions. To keep the introduction brief, we refer to Section \ref{sec:gap} for the statement and further comments.

\medskip

\noindent \textbf{{Strategy}}

\medskip

The stability results that we obtain are  inspired by the ones  in \cite{FGS24,CF25} in the way we use the Wasserstein distance as a mean of measuring stability. However, the argument is substantially different.  These previous works  combined  quantitative differential estimates  with Stein's method for comparing probability distributions via approximate integration by parts formulas. Here we argue via symmetrization to reduce the problem to stability on the model space. Since the most difficult part of  \cite{FGS24,CF25} are the quantitative differential estimates, arguing by symmetrization simplifies the analysis, allowing us to reach \emph{non-quadratic} functional inequalities, thanks also to the results of \cite{DEFFL22} on quantitative functional stability for Sobolev inequalities on $\rr^N$ (see also \cite{Frank2024}). 

Recall that in the Euclidean space the Schwartz symmetrization of a (non-negative) function $u$ is a radial non-increasing function $u^*$ which is equimeasurable with $u$, meaning that $|\{u>t\}|=|\{u^*>t\}|$ for a.e.\ $t>0.$ The classical P\'olya-Szeg\H{o} inequality  says that the symmetrization process does not increase the Dirichlet energy. This fact has numerous applications in deducing sharp functional inequalities and study their rigidity properties (see e.g.\ \cite{BE91}).

It turns out that a rearrangement procedure can be performed even if the space is not symmetric as the Euclidean space. B\'erard and Meyer \cite{BM82} showed that a version of the P\'olya-Szeg\H{o} inequality holds on $n$-dimensional Riemannian manifolds with Ricci curvature bounded below by $n-1$, where the rearrangement $u^*$ is built as a radial non decreasing function on the round sphere $\mathbb S^n$. Later this result has been generalized to metric spaces with Ricci curvature bounded below, where $u^*$ needs to be defined on suitable one-dimensional model spaces \cite{MS20,NV25polya,Baernstein19}. See Section \ref{sec:rearrangement} for additional details. 

The main advantage for us to perform a symmetrization is that it behaves well under the process of taking the push forward measure.  The key observation is that the very equimeasurability of $u$ and $u^*$ ensures that, even if they live in different spaces endowed with different reference measures, the push forward of  said  measures via $u$ and $u^*$ respectively is the same. This will allow us  to effectively reduce the stability problem to quantitative  functional stability on a fixed one-dimensional space. 

It is worth mentioning that rearrangement techniques have previously been employed in the study of quantified stability for functional inequalities as a means of reducing problems to a simpler, more symmetric class of objects (see e.g.\ \cite{DEFFL22,FMP08,CFMP}).

	\section{Useful background}

    \subsection{Sobolev spaces on metric setting}
    We assume the reader to be familiar with the notion of Sobolev space $W^{1,2}(\X)$ in a metric measure space $\Xdm$ and refer to \cite{GP20,Bjorn-Bjorn11} for an introduction on the topic. To keep the note self-contained we recall the basic definitions. For any $f\in \LIP(\X) $  set 
    \[
    \lip f(x)\coloneqq \limsup_{y \to x} \frac{|f(x)-f(y)|}{\sfd(x,y)}, \quad \forall x\in \X,
    \]
    with the convention that $\lip f(x)\coloneqq 0$ if $x$ is isolated.

    \begin{definition}
        A function $f\in L^2(\mm)$ belongs to the space $W^{1,2}(\X)$
        if its Cheeger energy is finite:
        \[
        {\rm Ch}(f)\coloneqq \inf\left\{ \liminf_n \int \lip^2 f_n \d \mm  \ : \  f_n\overset{L^2(\mm)}{\longrightarrow} f,\, (f_n)\subset \LIP(\X)\cap L^2(\mm)  \right\}<\infty.
        \]
    \end{definition}
The space $W^{1,2}(\X)$ endowed with the norm  $\|f\|_{W^{1,2}(\X)}\coloneqq \sqrt{\|f\|_{L^2(\mm)}+{\rm Ch}(f)}$ is a Banach space.   Recall also that for any $f\in W^{1,2}(\X)$ there exists an appropriate function $|\nabla f|\in L^2(\mm)$, playing the role of the modulus of the gradient, such that ${{\rm Ch}(f)}=\int |\nabla f|^2\d \mm.$ For smooth functions on a Riemannian manifold $|\nabla f|$ is the usual norm of the gradient. 

In the one-dimensional case $\Xdm=(\overline I,\sfd_e, g\d t)$, where $I\subset \rr$ is an open interval and $g$ is a positive continuous function on $I$, it holds $f \in W^{1,2}(\X)$ if and only if $f \in W^{1,1}_{loc}(I)$ (in the usual sense) and $f,f'\in L^2(I;g \mathcal L^1)$, in which case $|\nabla f|=|f'|$  $\mathcal L^1$-a.e.\ in $I$  (see e.g.\ \cite[Lemma A.1]{NV25polya}).

	\subsection{Spaces with curvature bounded from below} \label{subsect_rcd}
	
    Let $(M^n,g)$ be an $n$-dimensional Riemannian manifold with (possibly empty) $C^2$ boundary $\partial M$ and let $V\in C^2(M)$ be positive. The modified $N$-Bakry-\'Emery Ricci tensor is given by
    \[
    {\rm Ric}_g^N\coloneqq {\rm Ric}_g+{\rm Hess}\, V  -\frac{\nabla V\otimes \nabla V}{N-n}, 
    \]
    where ${\rm Ric}_g$ is the Ricci tensor, $N\in (-\infty,\infty]$ and with the convention that $\frac{1}{\infty}=0$, $\frac1 0=\infty$ and $\infty\cdot 0=0.$

    The weighted Riemannian manifold $(M^n,g,e^{-V}\vol_g)$ is said to satisfy the \textit{Curvature Dimension condition} $\CD(K,N)$, $K\in \rr$ and $N\in (-\infty,\infty]$ if $M$ is geodesically convex and $  {\rm Ric}_g^N\ge Kg.$

    In the more general setting of metric measure spaces the notion of weak Ricci curvature lower bound was introduced independently by Sturm \cite{Sturm06II} and Lott and Villani \cite{Lott-Villani09} via optimal transport. 
 These  notions are compatible with the smooth setting in the sense that $(M^n,g,e^{-V}\vol_g)$ satisfies the synthetic $\CD(K,N)$ condition for $N>1$ and $K \in \rr\cup \{\infty\} $ if and only if ${\rm Ric}_g^N\ge  K g$ (see e.g.\ \cite{Sturm06II}).
    The synthetic version of the $\CD(K,N)$ condition for negative $N$ has also been introduced in the purely metric setting (see \cite{ohta16,MRS23}), however we will not consider it in this paper mainly because the sharp isoperimetric inequality has not been obtained  yet in this generality.

Finally recall that $\Xdm$ satisfies the \textit{ Riemannian Curvature Dimension condition} $\RCD(K,N)$ for $K\in \rr$ and $N\ge 1$ if satisfies both the $\CD(K,N)$ condition and it is \textit{infinitesimally Hilbertian}, that is  $W^{1,2}(\X)$ is a Hilbert space (see \cite{Gigli12}). For more background, variants and equivalence definitions of synthetic  Ricci curvature lower bounds we refer to the surveys \cite{AmbICM,Gigli23_working,sturmSurvey}.
    
To deal with sharp geometric and functional inequalities it is useful to restrict to the smaller class of essentially non-branching spaces introduced in \cite{RajalaSturm12}.
\begin{definition}\label{def:nonbr}
A metric measure space $\Xdm$ is called \textit{essentially non-branching} if  for any
$\mu_0,\mu_1\in  P_2(X)$, with $\mu_0$ absolutely continuous with respect to $\mm$, any element of ${\rm OptGeo}(\mu_0,\mu_1)$ is
concentrated on a set of non-branching geodesics.
\end{definition}
$\RCD$ spaces and in particular Riemannian manifolds are essentially non-branching.

	\subsection{One-dimensional model spaces}\label{sec:modelspaces}

	In this note we will work with three model spaces:
    \begin{itemize}
        \item 
    \textit{$\CD(N-1,N)$ model space} for $N>1$:  $I_N\coloneqq([0,\pi],\sfd_e,\mm_N)$, where $\sfd_e$ is the Euclidean distance and
		\[\mm_{N}:= \tfrac{1}{c_{N}}\sin^{N-1}\Leb 1\restr{[0,\pi]}, \]
		with $c_{N}:= \int_0^\pi\sin^{N-1}(t)(t)\, \d t.$
        \item 

    \textit{$\CD(N+1,-N)$ model space} for $N>1$:  $I_{-N}\coloneqq(\rr,\sfd_e,\mm_{-N})$, where 
		\[\mm_{-N}:= \tfrac{1}{c_{-N}}\cosh(t)^{-N-1}\Leb 1, \]
		with $c_{-N}:= \int_\rr\cosh^{-N-1}(t)(t)\, \d t.$
        \item  \textit{$\CD(1,\infty)$ model space}:  $I_{\infty}\coloneqq(\rr,\sfd_e,\gamma)$, where 
		\[\gamma:= \frac{1}{\sqrt {2\pi}}e^{\frac{-t^2}{2}}\Leb 1. \]
    \end{itemize}
Recall the sharp Sobolev inequality in $I_N$:
\begin{equation}\label{eq:sharp sobolev IN}
     \|u\|_{L^{2^*}(\mm_N)}^2\le \frac{2^*-2}{N}\|u'\|_{L^2(\mm_N)}^2+\|u\|_{L^2(\mm_N)}^2,   \quad \forall u \in W^{1,2}(I_N)
\end{equation}
 where $2^*=\frac{2N}{N-2}$ (see \cite{BakryGentilLedoux14}).
Equality is achieved, up to a change of sign, precisely by the following class of functions:
	\begin{equation}\label{eq:model bubble}
		v_{a,b}(t)\coloneqq \frac{1}{(a-b\cos(t))^\frac{N-2}{2}}, \quad t\in [0,\pi]; \quad a>0,\, b\in \rr,\, b\in(-a,a).
	\end{equation}
	Note that $v_{a,b}$ is a monotone function. A direct computation shows
\begin{equation}\label{eq:v 2*norm formula}
     \|v_{a,b}\|_{L^{2^*}(\mm_N)}=(a^2-b^2)^\frac{2-N}4.
\end{equation}
  In particular $\|v_{a,\sqrt{a^2-1}}\|_{L^{2^*}(\mm_N)}=1$ for all $a\ge 1.$
Restricting ourselves to the renormalized minimizers $v_{a,\pm \sqrt{a^2-1}}$, the push-forward 
	  $ (\pm v_{a,\pm \sqrt{a^2-1}})_\sharp\mm_N$ has the form $g_a(t)\mathcal L^1\restr{[A^{-1},A]},$
where $A\coloneqq (a+\sqrt{a^2-1})^\frac{N-2}2$ and 
    \begin{equation}\label{eq:foruma mu}
    \begin{aligned}
            g_a(t)= \frac{2t^{-\frac{N}{N-2}}}{c_N(\sqrt{a^2-1})^{N-1}(N-2)}\left[(A^\frac{2}{N-2}-t^{\frac{2}{2-N}})(t^{\frac{2}{2-N}}-A^{\frac{2}{2-N}})  \right]^\frac{N-2}{2}.
    \end{aligned}
    \end{equation}

	\section{Rearrangement in metric spaces}\label{sec:rearrangement}
For this part we borrow the notations from \cite[Section 3]{NV25polya}.
Let  $\Xdm$ be a metric measure space  and $u:\X\to \rr$ be a Borel function such that $\mm( \{u>t\})<\infty$ for any $t >\essinf u$. We define  $\mu:(\essinf u,+\infty)\to[0,\infty)$, the distribution function of $u$ as  $\mu(t)\coloneqq \mm(\{u> t\})$. For $u$ and $\mu$ as above, we consider the generalized inverse $u^{\#}:[0,\infty]\to [\essinf u,\esssup u]$ of $\mu$ defined by
\begin{equation*}
	u^{\#}(s)\coloneqq 
	\begin{cases*}
		{\rm ess}\sup u & \text{if $s=0$},\\
		\inf\left\lbrace t > \essinf u \ :\ \mu(t)<s \right\rbrace &\text{if $s>0$},\\
  \essinf u & \text{if $s=\infty$}.
	\end{cases*}
\end{equation*}
Note that $u^{\#}$ is non-increasing and left-continuous. Moreover, it holds
$$u^{\#}(s)={\rm ess}\inf u, \qquad \text{for all $s \ge  \mm(\X)$.}
$$

To define the decreasing rearrangement of $u$ on the real line, we  consider a weighted interval $(I,\omega)$, where $I\subset \R$ is a possibly unbounded open interval and $\omega=g \Leb 1\mres I$ for some continuous  function $g: I\to (0,\infty)$.  We also assume the following  basic conditions to be true:
\begin{align}
    &\mm(\X)\le \omega(I),   \label{eq:mass compatibility}  \\
&{\omega((-\infty,x])<\infty,\quad\forall x \in I,\qquad \lim_{x\to -\infty}\omega((-\infty,x])=0}. \label{eq:zero a sx}
\end{align}

We also define the usual \textit{cumulative distribution function} $F_\omega: \R \to [0,\infty] $ as 
\[
F_\omega(x) \coloneqq \omega((-\infty,x)),\qquad \forall \, x\in\R.
\]
For $u$ as above we define the \emph{decreasing rearrangement} $u^*:I \to [\essinf u,\esssup u]$ of $u$ with respect to  $\omega$ as
\[
I \ni x\mapsto u^*(x) \coloneqq u^\#(F_\omega(x)).
\]
Note that $u^*$ is finitely valued, monotone non-increasing and left-continuous as so it is $u^\#$. Note that we are not assuming that $u$ is non-negative.
By construction, $u^*$ is independent of the chosen representative of $u$ modulo $\mm$-a.e.\ equality. Therefore $u^*$ is in fact well defined for any $u \in L^1(\mm)$. It holds that $u$ and $u^*$ are equimeasurable, meaning that
\begin{equation}\label{eq:equi meas}
    \mm( \{u>t\}) = \omega( \{u^*>t\}), \quad \text{for all $t\in \rr.$}
\end{equation}
Moreover all continuous functions $G:\R \to \R$ it holds
    \begin{equation}
    \int_\X G(u) \,\d \mm= \int_{\rr} G(u^*)\, \d \omega,
    \label{eq:composition rearr G}
    \end{equation}
   in the sense that  one integral makes sense if and only if the other one does in which case equality holds (see \cite[Lemma 3.1]{NV25polya}). In particular $\| u\|_{L^p(\mm)} = \| u^*\|_{L^p(\omega) }$, for all $p \in [1,\infty]$. 

The cumulative distribution function $F_g\coloneqq F_\omega$ is strictly increasing in $I$ and admits an inverse $F_g^{-1}: (0,\omega(I))\to I$. We define the \textit{isoperimetric perimeter profile} function $\mathcal I^\flat_g(v): [0,\omega(I)]\to [0,\infty)$ as
\begin{equation}\label{eq:profile}
    \Ig(v)\coloneqq g(F_g^{-1}(v)), \text{ if $v>0$,} \quad \Ig(0)\coloneqq 0,\,\Ig(\omega(I))\coloneqq 0. 
\end{equation}

We report below in simplified form the P\'olya-Szeg\H{o} principle which was obtained in \cite{NV25polya}.
\begin{theorem}\label{thm:main Sobolev PZ metric}\label{thm:polya}
Let $\Xdm$ be a metric measure space and let $(I,\omega)$ be a weighted interval as above. Suppose that 
\begin{equation}\label{eq:isop polya pre}\tag{Iso}
   \Per(E)\ge \Ig(\mu(E)), \quad   \text{for all Borel sets $E\subset \X$}.
\end{equation}
Then for all $u\in W^{1,2}(\X)$ it holds $u^*\in W^{1,2}(I)$ and
\begin{equation}\label{eq:PZ abstract}
    \int_\X |\nabla u|^2\,\d\mm \ge \int_I |(u^*)'|^2\, \d \omega.
\end{equation}
\end{theorem}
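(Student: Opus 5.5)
The plan is to prove \eqref{eq:PZ abstract} first for locally Lipschitz $u$ with $\int \lip^2 u\,\d\mm<\infty$, with $\lip u$ in place of $|\nabla u|$. The general case then follows by approximation. By the definition of the Cheeger energy there are $u_n\in\LIP(\X)\cap L^2(\mm)$ with $u_n\to u$ in $L^2(\mm)$ and $\int\lip^2u_n\,\d\mm\to\int|\nabla u|^2\d\mm$. Since the decreasing rearrangement is $1$-Lipschitz from $L^2(\mm)$ to $L^2(\omega)$ (a consequence of equimeasurability \eqref{eq:equi meas} and the layer-cake representation), we get $u_n^*\to u^*$ in $L^2(\omega)$. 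The Lipschitz case gives $\sup_n\int|(u_n^*)'|^2\d\omega<\infty$. Lower semicontinuity of the one-dimensional Dirichlet energy on $(I,\omega)$ under $L^2$ convergence, which is valid because $g$ is continuous and positive so that locally we are in classical $W^{1,2}$, then gives $u^*\in W^{1,2}(I)$ and \eqref{eq:PZ abstract}.

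For Lipschitz $u$, the key tool is the coarea formula for Lipschitz functions on metric measure spaces. For every Borel $B\subset\rr$ it gives $\int_{u^{-1}(B)}\lip u\,\d\mm\ge\int_B\Per(\{u>r\})\,\d r$. Combining this with \eqref{eq:isop polya pre} and the Cauchy--Schwarz inequality, I obtain for all $t<s$
\[
\Big(\int_t^s\Ig(\mu(r))\,\d r\Big)^2\le \Big(\int_{\{t<u\le s\}}\lip u\,\d\mm\Big)^2\le \int_{\{t<u\le s\}}\lip^2 u\,\d\mm\,\big(\mu(t)-\mu(s)\big).
\]
The idea is that the left-hand side is exactly what $u^*$ attains. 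The level sets $\{u^*>r\}$ are half-lines $(-\infty,F_g^{-1}(\mu(r)))$, whose perimeter equals $\Ig(\mu(r))$. Moreover $|(u^*)'|$ is constant on level sets, so Cauchy--Schwarz is an equality there. Concretely, I would show that $u^*$ is locally absolutely continuous on $I$. On an interval $[x,y]$ where $u^*$ takes values in $[t,s]$, its oscillation is controlled by the inverse of the map $r\mapsto\int^r\Ig(\mu)$. Then, by the change of variables $x=F_g^{-1}(\mu(r))$, the quantity $\int_{\{t<u^*\le s\}}|(u^*)'|^2\d\omega$ can be rewritten in terms of $\Ig(\mu(r))^2$ and $\d\mu$. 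It is thereby bounded by the right-hand side above, summed over fine partitions $t_0<t_1<\dots<t_k$ of the range of $u$. Letting the mesh go to zero gives the inequality.

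I expect the main obstacle to be the lack of regularity of $\mu$. First, it can have jumps. These correspond to plateaus $\mm(\{u=t\})>0$ and are harmless: $u^*$ is constant on the corresponding interval and $\lip u=0$ a.e.\ on the plateau. Second, and more seriously, $\mu$ can have a singular continuous part, and this could a priori make $u^*$ fail to be absolutely continuous, producing a Cantor part. My first attempt would be to use the discrete difference-quotient inequality displayed above rather than derivatives of $\mu$. That inequality shows that, on sets of values where $\mu$ decreases singularly, the energy $\int\lip^2u$ would have to be infinite unless $\int\Ig(\mu(r))\,\d r$ vanishes there. Since $\Ig>0$ on $(0,\omega(I))$, this forces the increments of $u^*$ to be absolutely continuous with respect to $\Leb1$. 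Once this is done, the partition argument passes to the limit by monotone convergence.
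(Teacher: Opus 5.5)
The paper does not prove this statement; it quotes it, in simplified form, from \cite{NV25polya}. Your outline is the classical Talenti-type scheme behind such results. For Lipschitz functions you combine the coarea formula, \eqref{eq:isop polya pre} and Cauchy--Schwarz on the slabs $\{t<u\le s\}$. You then relax, using the definition of ${\rm Ch}$, $L^2$-continuity of $u\mapsto u^*$, and lower semicontinuity on $I$. The difference from the usual treatment is bookkeeping. With $\lambda\coloneqq\Ig(\mu(r))\,\d r$ and $\nu\coloneqq-D\mu=u_\sharp\mm$ on the value line, you work with partition sums $\sum_i\lambda(J_i)^2/\nu(J_i)\le\int\lip^2u\,\d\mm$, rather than differentiating $\mu$ and $r\mapsto\int_{\{u>r\}}\lip^2u\,\d\mm$ a.e. This is legitimate, and it even gives the inequality on each slab. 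So the approach works, but three points need correcting.

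First, you misidentify the obstruction. A singular part of $\mu$ is harmless. For locally absolutely continuous $u^*$ it is exactly the image of $\omega\mres\{(u^*)'=0\}$, and it already occurs for Lipschitz functions on the model itself: take $u^*$ strictly decreasing with derivative vanishing on a fat Cantor set. What could destroy $u^*\in W^{1,1}_{loc}(I)$ is the reverse situation: a $\nu$-null set of values of positive Lebesgue measure in $(\essinf u,\esssup u)$. A gap in the essential range is one such set, and it produces a jump of $u^*$. So what you need is $\Leb1\mres(\essinf u,\esssup u)\ll\nu$. Your displayed inequality gives $\lambda\ll\nu$, which is exactly this, because $\Ig(\mu(r))>0$ on that range.

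In fact you only need globally Lipschitz $u_n$, and then there is a shortcut. The coarea inequality with $\lip u\le L$ gives $\int_{t}^{s}\Ig(\mu(r))\,\d r\le L(\mu(t)-\mu(s))$. Take $x_1<x_2$ in a compact interval $K\subset I$ and set $t=u^*(x_2)<s=u^*(x_1)$. Then $\mu(r)\in[F_g(x_1),F_g(x_2)]$ for $r\in(t,s)$, and $\mu(t')-\mu(s')\le\omega((x_1,x_2))$ for $t<t'<s'<s$. Hence $u^*(x_1)-u^*(x_2)\le L\,\frac{\sup_K g}{\inf_K g}\,(x_2-x_1)$, so $u^*$ is locally Lipschitz and no Cantor-part discussion is needed (cf.\ \cite{MS20}).

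Second, the final limit has a direction issue. For a fixed partition, Cauchy--Schwarz on the model gives $\sum_i\lambda(J_i)^2/\nu(J_i)\le\int_I|(u^*)'|^2\,\d\omega$, which is the wrong way round. What you need is that these sums increase to $\int_I|(u^*)'|^2\,\d\omega$ under refinement. They are the squared $L^2(\omega)$-norms of the conditional expectations of $|(u^*)'|$ on the intervals $(u^*)^{-1}(J_i)$. Moreover $|(u^*)'|$ is $\sigma(u^*)$-measurable up to null sets, since it vanishes on plateaus. Martingale convergence (equivalently, differentiation of $\lambda$ with respect to $\nu$) then gives the limit; monotonicity under refinement alone does not identify it.

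Third, two minor points.
\begin{itemize}
\item The claim $\lip u=0$ a.e.\ on a plateau $\{u=t\}$ is false in general metric measure spaces; only $|\nabla u|=0$ holds there, by locality. You do not need it anyway, since $u^*$ is constant on the corresponding interval.
\item $L^2$-non-expansivity of $u\mapsto u^*$ requires $\mm(\X)=\omega(I)$. If $\mm(\X)<\omega(I)$, then $u\equiv1$, $v\equiv0$ is a counterexample. Equality of masses holds in all the paper's applications, and there the estimate is just $\|u^*-v^*\|_{L^2(\omega)}=W_2(u_\sharp\mm,v_\sharp\mm)\le\|u-v\|_{L^2(\mm)}$, by Lemma \ref{lem:le trick} and optimality of the monotone coupling.
\end{itemize}
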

The result above can be applied to all the model spaces introduced in Section \ref{sec:modelspaces} thanks to the following isoperimetric inequalities.

    \begin{theorem}[{Isoperimetric inequality for $\CD(N-1,N)$ spaces, \cite{CM17a}}]\label{thm:isop N}Let $\Xdm$ be an essentially non-branching $\CD(N-1,N)$ space, $N>1,$ with $\mm(\X)=1.$ Then \eqref{eq:isop polya pre} holds with $g(t)=c_N^{-1}\sin(t)^{N-1}$ and $I=(0,\pi).$
    \end{theorem}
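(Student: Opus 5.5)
The plan is to follow the localization (needle decomposition) strategy of Cavalletti--Mondino and reduce \eqref{eq:isop polya pre} to a one-dimensional comparison statement. Fix a Borel set $E\subset \X$ with $v\coloneqq\mm(E)\in(0,1)$. If $v\in\{0,1\}$ there is nothing to prove, since $\Ig(0)=\Ig(1)=0$. We may also assume $\Per(E)<\infty$.

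\textbf{Step 1: localization.} Consider the zero-mean function $f\coloneqq \chi_E - v\in L^1(\mm)$. Apply the $L^1$-optimal transport localization theorem, which is valid in essentially non-branching $\CD(K,N)$ spaces with $K=N-1$. It yields:
\begin{itemize}
\item a partition of $\X$, up to an $\mm$-null set, into transport rays $\{X_q\}_{q\in Q}$, each isometric to an interval of length at most $\pi$;
\item a disintegration $\mm=\int_Q \mm_q\,\d\mathfrak q(q)$ with $\mathfrak q$ a probability measure and $\mm_q=h_q\,\mathcal L^1\restr{X_q}$ probability measures, where each $(X_q,\sfd_e,\mm_q)$ is a $\CD(N-1,N)$ space, that is, $h_q^{1/(N-1)}$ satisfies $(h_q^{1/(N-1)})''+h_q^{1/(N-1)}\le 0$ in the distributional sense;
\item the balance condition $\int f\,\d\mm_q=0$, that is, $\mm_q(E\cap X_q)=v$ for $\mathfrak q$-a.e.\ $q$.
\end{itemize}
On rays where $f\equiv 0$, the set $E\cap X_q$ would have measure $0$ or $1$; this is incompatible with $v\in(0,1)$ except on a null set, so the balance condition is meaningful.

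\textbf{Step 2: the perimeter dominates the integral of one-dimensional perimeters.} We want
\[
\Per(E)\ge \int_Q \Per_{\mm_q}(E\cap X_q)\,\d\mathfrak q(q).
\]
Take Lipschitz $u_n\to\chi_E$ in $L^1$ with $\int\lip u_n\,\d\mm\to\Per(E)$. Restricted to a ray, $u_n$ is Lipschitz with $|u_n'|\le \lip u_n$. Disintegrate, then apply Fatou together with lower semicontinuity of the one-dimensional total variation. This requires passing to a subsequence so that $u_n\restr{X_q}\to\chi_{E\cap X_q}$ in $L^1(\mm_q)$ for a.e.\ $q$; such a subsequence exists because $\int\|u_n-\chi_E\|_{L^1(\mm_q)}\d\mathfrak q\to0$.

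\textbf{Step 3: one-dimensional isoperimetry.} For a $\CD(N-1,N)$ probability density $h$ on an interval of length at most $\pi$, show that every set $F$ with $\mm_h(F)=v$ has perimeter at least $\Ig(v)=c_N^{-1}\sin^{N-1}(F_g^{-1}(v))$. The argument proceeds in two stages.
\begin{itemize}
\item \emph{Reduction to half-lines.} By a rearrangement along the interval, or simply by noting that the perimeter of a set of finite perimeter is at least the number of its boundary points weighted by $h$, it suffices to consider half-lines $\{t<r\}$ or $\{t>r\}$. For such a half-line the perimeter is $h(r)$ with $\int^r h=v$.
\item \emph{Comparison with the model.} Show that $h(r)\ge \Ig(v)$ by comparing with the model density. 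The concavity condition on $h^{1/(N-1)}$ gives Bishop--Gromov-type ratio monotonicity of $h(s)/\sin^{N-1}(s-a)$ relative to the endpoints of the interval. Combined with the diameter bound $\le\pi$, this shows that the profile $v\mapsto h(F_h^{-1}(v))$ lies above the model profile. This is the standard one-dimensional Lévy--Gromov comparison, and I would use the same ODE comparison as Milman's.
\end{itemize}

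\textbf{Step 4: conclusion.} Integrating Step 3 over $q$ and using Step 2 gives
\[
\Per(E)\ge\int_Q \Ig(v)\,\d\mathfrak q=\Ig(v).
\]

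\textbf{Main obstacle.} I expect the main difficulty to be the localization itself: constructing the needle decomposition in the non-smooth setting and verifying that the conditional measures inherit $\CD(N-1,N)$. This is where essential non-branching is needed. Steps 2 and 3, while requiring care with the relaxed notion of perimeter and with the ODE comparison, are comparatively routine.
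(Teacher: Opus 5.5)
Your architecture (localize $f=\chi_E-v$, disintegrate the perimeter along needles, compare in one dimension) is the route of \cite{CM17a}, which the paper only quotes and does not prove. Running Step 2 directly for $\Per$ through Lipschitz approximations, rather than for the Minkowski content followed by a relaxation, is a legitimate variant. In Step 1 the correct reason the needles exhaust $\X$ is simpler than yours: the localization theorem leaves uncovered only a set on which $f=0$, and $f=\chi_E-v$ never vanishes, so that set is $\mm$-null. The real problems are in Step 3, which you call routine but which does not go through as written.

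\emph{Reduction to half-lines.} Knowing $\Per_{\mm_h}(F)=\sum_{x\in\partial^*F}h(x)$ does not show that an interval $(a,b)$, with perimeter $h(a)+h(b)$, beats the half-line of the same mass, whose perimeter is $h$ evaluated at a different point. Likewise, ``rearranging $F$ into a half-line'' is the very statement to be proved. The missing idea is concavity of the profile $J_h\coloneqq h\circ F_h^{-1}$ on $[0,1]$, extended by $0$ at the endpoints. A $\CD(N-1,N)$ density is log-concave, so $J_h'=(\log h)'\circ F_h^{-1}$ is non-increasing. Hence $\alpha\mapsto J_h(\alpha)+J_h(\alpha+v)$ is concave on $[0,1-v]$ and is bounded below by $\min\{J_h(v),J_h(1-v)\}$. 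Countable unions of intervals then follow from subadditivity of the concave function $v\mapsto\min\{J_h(v),J_h(1-v)\}$, which vanishes at $0$ (Bobkov's argument). Alternatively, once $J_h\ge J_g$ is known pointwise, you can run the same argument with the concave, symmetric $J_g$.

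\emph{Comparison with the model.} Ratio monotonicity of $h/\sin^{N-1}(\cdot-a)$ and $h/\sin^{N-1}(b-\cdot)$ from the true endpoints gives $h(r)\ge(1-v)\sin^{N-1}(r-a)/\int_{r-a}^{b-a}\sin^{N-1}$ and its mirror bound. Together these yield $h(r)\ge\Ig(v)$ when $b-a=\pi$. For shorter needles they are too weak, and the diameter bound does not help, since shortening the needle only weakens them. For instance, take $N=2$ and $h(s)=\cos(s-1)/(2\sin 1)$ on $[0,2]$, which satisfies $h''+h=0$. At $v=\tfrac12$ both bounds give about $0.44<\tfrac12=\Ig(\tfrac12)$, although in fact $h(1)\approx 0.59$. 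So you genuinely need Milman's comparison. The profile $J=J_h$ satisfies $-JJ''\ge (N-1)+\frac{(J')^2}{N-1}$, with equality for $J_g$. A maximum-principle argument then gives $J_h\ge J_g$: touch $J_h$ from below by $\lambda J_g$ with $\lambda<1$, which are strict subsolutions, taking care at the endpoints. Invoking E.~Milman's one-dimensional theorem for arbitrary Borel subsets of a $\CD(N-1,N)$ needle, as \cite{CM17a} do, repairs both points at once.
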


     \begin{theorem}[{Isoperimetric inequality for $\RCD(1,\infty)$ spaces, \cite{AM16}}]\label{thm:isop inf}Let $\Xdm$ be an $\RCD(1,\infty)$ space with $\mm(\X)=1.$ Then  \eqref{eq:isop polya pre} holds with $g(t)=\frac{1}{\sqrt{2\pi}}e^{\frac{-t^2}{2}}$ and $I=\rr.$
    \end{theorem}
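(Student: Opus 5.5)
The plan is to follow the Bakry--Ledoux semigroup proof of the Gaussian isoperimetric inequality, carried over to the non-smooth setting through the calculus available on $\RCD(1,\infty)$ spaces. First observe that with $g=\gamma$ and $I=\rr$ the profile $\Ig$ in \eqref{eq:profile} is the Gaussian isoperimetric function $\mathcal I(v)=\gamma'(\Phi^{-1}(v))$ up to sign, i.e.\ $\mathcal I=\gamma\circ\Phi^{-1}$, where $\Phi$ is the Gaussian cumulative distribution function. This $\mathcal I$ is concave, symmetric around $1/2$, vanishes at $0$ and $1$, and solves $\mathcal I\mathcal I''=-1$. The target is the functional (Bobkov-type) inequality
\begin{equation*}
\mathcal I\Big(\int f\,\d\mm\Big)\le \int \sqrt{\mathcal I(f)^2+|\nabla f|^2}\,\d\mm ,\qquad f\in \LIP(\X),\ 0\le f\le 1 .
\end{equation*}
The perimeter statement then follows by relaxation. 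Take Lipschitz $f_n\to\chi_E$ in $L^1(\mm)$ with $\int\lip f_n\,\d\mm\to\Per(E)$; this is possible by the definition of perimeter via relaxation, together with the identification $|\nabla f|=\lip f$ a.e.\ for Lipschitz $f$ on $\RCD$ spaces. The term $\int\mathcal I(f_n)$ tends to $0$ by dominated convergence, since $\mathcal I(0)=\mathcal I(1)=0$ and $\mathcal I\le 1$. Finally use $\sqrt{a^2+b^2}\le a+b$.

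To prove the functional inequality I would use the heat flow $P_t$, which is linear, Markovian and mass preserving on an $\RCD(1,\infty)$ space with $\mm(\X)=1$. The key analytic input is the Bakry--\'Emery gradient estimate. In the self-improved form of Savaré it reads $|\nabla P_t f|\le e^{-t}P_t|\nabla f|$ $\mm$-a.e., and it comes with its strengthened consequence, the reverse-type estimate $\mathcal I(P_tf)^2$-comparison. Following Bakry--Ledoux, I would consider, for fixed $T$ and $0\le f\le 1$ Lipschitz,
\begin{equation*}
\Psi(s)\coloneqq P_s\Big(\sqrt{\mathcal I(P_{T-s}f)^2+c(s)\,|\nabla P_{T-s}f|^2}\Big),
\end{equation*}
with a suitable $c(s)$ (for $K=1$ one can take $c\equiv1$). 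I would show that $\Psi$ is non-decreasing in $s$. Comparing $s=0$ with $s=T$ and letting $T\to\infty$ then gives the inequality pointwise in the limit: $P_Tf\to\int f$ by ergodicity, which follows from the Poincaré inequality, and $|\nabla P_Tf|\to0$. Integrating against $\mm$ uses the invariance of $\mm$.

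The main obstacle is justifying the monotonicity of $\Psi$ without smoothness. In the smooth case one differentiates and uses $\Gamma_2\ge\Gamma$ together with $\mathcal I\mathcal I''=-1$ to obtain a sum of squares. In the $\RCD$ setting I would instead rely on the measure-valued $\Gamma_2$ calculus (Savaré, Gigli). Concretely, I would establish the equivalent pointwise two-point estimate
\begin{equation*}
\sqrt{\mathcal I(P_tf)^2+|\nabla P_tf|^2}\le P_t\sqrt{\mathcal I(f)^2+|\nabla f|^2}.
\end{equation*}
This is done by writing the difference as an integral $\int_0^t\frac{\d}{\d s}\Psi(s)\,\d s$ tested against non-negative $\phi\in L^1$, and moving the derivative onto the test function via $P_s\phi$. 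The chain rule in $W^{1,2}$, the improved Bochner inequality $\Delta\frac{|\nabla h|^2}{2}-\langle\nabla h,\nabla\Delta h\rangle\ge|\nabla h|^2+|\nabla|\nabla h||^2$ in distributional form, and a regularization of the square root by $\sqrt{\delta+\cdot}$ keep everything inside the function spaces where these tools apply. Since $\RCD(1,\infty)$ implies $\mm(\X)=1$ finite, all terms are integrable. Once this is in place, the remaining steps are routine limits.
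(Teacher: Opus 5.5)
Your outline is correct and is essentially the argument of \cite{AM16}, which the paper invokes without proof. That argument obtains Bobkov's functional inequality by the Bakry--Ledoux semigroup interpolation (your choice $c\equiv 1$ works since $K=1$), justifies it on $\RCD(1,\infty)$ through Savar\'e's self-improved, measure-valued Bochner inequality, and then relaxes to sets. One small correction: in the relaxation step you only need $|\nabla f|\le \lip f$ $\mm$-a.e., which holds on any metric measure space, so the pointwise identification $|\nabla f|=\lip f$ you invoke is unnecessary.
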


     \begin{theorem}[{Isoperimetric inequality for $\CD(N+1,-N)$ spaces, \cite{Mi17}}]\label{thm:isop -N} Let $(M,g, \mu)$ be a weighted smooth Riemannian manifold satisfying the  $\CD(N+1,-N)$ condition with $\mu(M)=1$ and $N>1$. Then  \eqref{eq:isop polya pre} holds with $g(t)= \tfrac{1}{c_{-N}}\cosh(t)^{-N-1}$ and $I=\rr.$
    \end{theorem}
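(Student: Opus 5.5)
The plan is to prove the inequality \eqref{eq:isop polya pre} by localization (needle decomposition), which reduces it to a one-dimensional comparison. This mirrors the route of \cite{CM17a} for Theorem \ref{thm:isop N}, with the model density $\sin^{N-1}$ replaced by $\cosh^{-N-1}$.

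Since perimeter is the lower Minkowski content (up to the usual relaxation), it suffices to fix a Borel set $E\subset M$ with $v\coloneqq\mu(E)\in(0,1)$ and show
\[
\liminf_{r\to0}\frac{\mu(E_r)-\mu(E)}{r}\ge \Ig(v).
\]

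\textbf{Step 1: localization.} Apply Klartag's needle decomposition, valid on weighted Riemannian manifolds for every $N\in(-\infty,1)\cup[n,\infty]$, to the zero-mean function $f\coloneqq \mathbbm 1_E-v$. This produces a partition of $M$ (up to a null set) into geodesics $X_\alpha$, a probability measure $\nu$ on the index set, and conditional measures $\mu_\alpha$ with the following properties:
\begin{itemize}
\item $\mu=\int \mu_\alpha\,\d\nu(\alpha)$;
\item each $(X_\alpha,|\cdot|,\mu_\alpha)$ is a one-dimensional $\CD(N+1,-N)$ space, i.e.\ $\mu_\alpha=h_\alpha\Leb1$ on an interval $J_\alpha$;
\item $\int f\,\d\mu_\alpha=0$, i.e.\ $\mu_\alpha(E\cap X_\alpha)=v$ for $\nu$-a.e.\ $\alpha$.
\end{itemize}
Since $(E\cap X_\alpha)_r\subset E_r\cap X_\alpha$ (distances along a needle dominate ambient distances), Fatou's lemma gives
\[
\liminf_{r\to0}\frac{\mu(E_r)-\mu(E)}{r}\ \ge\ \int \liminf_{r\to0}\frac{\mu_\alpha((E\cap X_\alpha)_r)-v}{r}\,\d\nu(\alpha).
\]
So it is enough to prove the one-dimensional statement: every one-dimensional $\CD(N+1,-N)$ probability density has isoperimetric profile at least $\Ig$.

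\textbf{Step 2: the one-dimensional reduction.} Set $\varphi\coloneqq h^{-1/(N+1)}=h^{1/(N'-1)}$ with $N'=-N$. The $\CD(N+1,-N)$ condition on $h$ becomes a differential inequality of the form $\varphi''\gtrless\varphi$. The model case $\varphi=\cosh$ saturates it, since $h=c_{-N}^{-1}\cosh^{-N-1}$. By a standard argument in one dimension, for each mass $v$ it suffices to compare half-lines $(-\infty,x)\cap J$ (or their mirror images). Indeed, a finite union of intervals has boundary measure at least that of a suitable half-line with the same mass, using the fact that the profile $v\mapsto h(F_h^{-1}(v))$ is concave in a suitable power. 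So we must show
\[
h\bigl(F_h^{-1}(v)\bigr)\ \ge\ \Ig(v)\qquad\text{for all }v\in(0,1).
\]

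\textbf{Step 3: profile comparison (main obstacle).} Write $I_h(v)\coloneqq h(F_h^{-1}(v))$ and derive the differential inequality that it satisfies. Substituting $x=F_h^{-1}(v)$, the condition on $\varphi$ translates into a second-order inequality of the form
\[
-\bigl(I_h^{a}\bigr)''\ \ge\ c\,I_h^{\,b},
\]
with exponents and constant depending on $N$, in which the model profile $\Ig$ is the equality case. Together with the boundary values $I_h(0)=I_h(1)=0$, a maximum-principle / ODE comparison, in the spirit of Bakry--Ledoux and Milman, should then give $I_h\ge \Ig$.

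I expect this step to be the delicate one, for two reasons. First, for negative $N$ the sign conventions flip relative to the case $N>1$, so the direction of the comparison must be checked carefully. Second, the intervals $J_\alpha$ may be unbounded, and the densities have only polynomial-in-$\cosh$ decay, so the behaviour of $I_h$ near $v=0,1$ has to be controlled. I would handle this by first comparing against the model on truncated intervals and then passing to the limit. Finally, integrating the one-dimensional bound in $\nu$ yields \eqref{eq:isop polya pre}.
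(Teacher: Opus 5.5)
The paper gives no proof of this statement; it is quoted from \cite{Mi17}. Your architecture (localization, then a one-dimensional comparison) is the standard one for such statements, but Step 2 has a genuine gap. In negative dimension it is \emph{not} true that half-lines are optimal inside a fixed needle, and the concavity you invoke cannot give this. Write $h=\varphi^{-(N+1)}$. Then the $\CD(N+1,-N)$ condition reads $\varphi''\ge\varphi$, and the power of $I_h=h\circ F_h^{-1}$ that it makes concave is $\frac{N}{N+1}<1$. Precisely, $\psi\coloneqq I_h^{N/(N+1)}$ satisfies $-\psi''\ge N\psi^{-(N+2)/N}$. This yields neither concavity nor subadditivity of $I_h$, because needles need not be log-concave.

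Here is a concrete counterexample. The function $\varphi_\eps(t)\coloneqq\eps\cosh t+\sqrt{\eps^4+\sinh^2 t}$ satisfies $\varphi_\eps''\ge\varphi_\eps$, so $h_\eps\propto\varphi_\eps^{-(N+1)}$ is a $\CD(N+1,-N)$ probability density on $\rr$. As $\eps\to0$, its mass concentrates at scale $\eps$, where $h_\eps\approx(\eps+|t|)^{-(N+1)}$. The symmetric interval of mass $\frac12$ then has boundary measure $\approx 2^{-1/N}h_\eps(0)$. Both half-lines of mass $\frac12$ have boundary measure $h_\eps(0)$. So the claim ``a finite union of intervals has boundary measure at least that of a suitable half-line with the same mass'' is false, and proving $h(F_h^{-1}(v))\ge\Ig(v)$ does not by itself finish the proof.

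The repair is to compare with the \emph{model} profile rather than with the needle's own half-lines. This uses three facts: $\Ig$ is concave (since $\cosh^{-N-1}$ is log-concave), $\Ig$ is symmetric, and the renormalized restriction of a $\CD(N+1,-N)$ needle to a subinterval is again such a needle. Induct on the number $m$ of boundary points of a finite union of intervals $A$ with $\mu(A)=v$. The case $m=1$ is the half-line bound. If $m\ge2$, after possibly replacing $A$ by its complement you may assume $A\supset(\inf J,x_1)$. Set $a\coloneqq F_h(x_1)\in(0,v)$ and restrict to $(x_1,\sup J)$; there $A$ has relative mass $s\coloneqq\frac{v-a}{1-a}$ and $m-1$ boundary points. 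Then $\mu^+(A)=h(x_1)+\sum_{i\ge2}h(x_i)\ge\Ig(a)+(1-a)\Ig(s)\ge\Ig(v)$. The last inequality holds because $a\mapsto\Ig(a)+(1-a)\Ig(\frac{v-a}{1-a})-\Ig(v)$ is concave on $[0,v]$ (a concave function plus the perspective of one) and vanishes at $a=0$ and $a=v$.

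By contrast, Step 3, which you flag as the main obstacle, is routine. The inequality for $\psi$ above is an equality for the model, and its right-hand side is strictly decreasing in $\psi$. So the maximum principle on $(0,1)$, with $\psi\ge0$ at the endpoints, gives $\psi\ge\psi_{\rm model}$, whether or not $J$ is bounded. Finally, Klartag's theorem requires $\int\sfd(x_0,\cdot)\,\d\mu<\infty$. This holds (e.g.\ by exponential integrability from the spectral gap of \cite{KM17}), but it needs to be stated.
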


	In the following result we note that the decreasing rearrangement preserves the pushforward of the underlying measure. This will play a key role in the proofs of the main theorems.  Even if it follows almost immediately from the definitions,  we were not able to find it explicitly stated in the literature. 
	\begin{lemma}\label{lem:le trick}
		Let $\Xdm$ be a metric measure space and let $(I,\omega)$ be a weighted interval as above. Then for any $u \in L^1(\mm)$, the pushforward of $\mm$ by the function $u$ and the pushforward of $\omega$ by its rearrangement $u^*$ coincide:
		\[
		u_\sharp\mm =u^*_\sharp\omega.
		\]
	\end{lemma}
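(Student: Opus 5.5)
The plan is to show that the two pushforward measures agree on the sets $(t,\infty)$ for every $t\in\rr$, and then upgrade this to equality as Borel measures on $\rr$. By definition of pushforward,
\[
u_\sharp\mm((t,\infty))=\mm(\{u>t\}),\qquad u^*_\sharp\omega((t,\infty))=\omega(\{u^*>t\}),
\]
and these coincide for every $t\in\rr$ by the equimeasurability property \eqref{eq:equi meas}. So the two measures have the same ``tail distribution function'' $t\mapsto\nu((t,\infty))$.

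To conclude I would use the standard uniqueness argument. Both $u_\sharp\mm$ and $u^*_\sharp\omega$ are positive Borel measures on $\rr$. Their total masses are $\mm(\X)$ and $\omega(I)$, and I need these to be equal. If $\mm(\X)<\omega(I)$ (allowed by \eqref{eq:mass compatibility}), I would check that $u^*=\essinf u$ on the part of $I$ where $F_\omega\ge\mm(\X)$. In that case I would interpret the statement with the mass counted correctly: either assume $\mm(\X)=\omega(I)$, which is the situation in all applications since both are probability measures, or note that the identity holds on $\rr\setminus\{\essinf u\}$. Assuming equal finite total mass, equality on the half-lines $(t,\infty)$ gives, by complementation, equality on $(-\infty,t]$. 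The family $\{(-\infty,t]\}_{t\in\rr}$ together with $\emptyset$ is a $\pi$-system generating the Borel $\sigma$-algebra. The $\pi$--$\lambda$ (Dynkin) theorem for finite measures with the same total mass then gives $u_\sharp\mm=u^*_\sharp\omega$.

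An alternative route, which avoids discussing the total mass separately, is to invoke \eqref{eq:composition rearr G} directly. For every continuous compactly supported $G$ one has $\int G\,\d u_\sharp\mm=\int_\X G(u)\,\d\mm=\int_I G(u^*)\,\d\omega=\int G\,\d u^*_\sharp\omega$. Since both measures are locally finite Radon measures on $\rr$, the Riesz representation theorem then gives equality. Here $u\in L^1(\mm)$ and Chebyshev's inequality guarantee $\mm(\{|u|>\delta\})<\infty$, but local finiteness near $0$ requires finite total mass, which holds in the setting of interest.

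The main obstacle is therefore not the identity on level sets, which is immediate from \eqref{eq:equi meas}. It is the bookkeeping of total mass and local finiteness, needed so that the uniqueness theorem applies. I would handle this by working in the normalized case $\mm(\X)=\omega(I)=1$ used throughout the paper.
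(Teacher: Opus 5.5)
Your argument is correct and is the same as the paper's. Equimeasurability \eqref{eq:equi meas} gives agreement on the half-lines $(t,\infty)$; complementation, using that both pushforwards are probability measures, gives agreement on $(-\infty,s]$; and the $\pi$--$\lambda$ theorem concludes. Your total-mass caveat is accurate and is exactly what the paper covers with ``since they are both probability measures''. If $\mm(\X)<\omega(I)$, the two pushforwards differ by an atom of mass $\omega(I)-\mm(\X)$ at $\essinf u$, so the normalization $\mm(\X)=\omega(I)=1$ is genuinely needed.
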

	\begin{proof}
		From \eqref{eq:equi meas} we obtain that the two measures $ u_\sharp\mm$ and $u^*_\sharp\omega$ agree on the  family of open half lines $\{(t,\infty) \ : \ t \in \rr \}.$ Since they are both probability measures, they agree also on the closed half lines $\{-\infty, s] \ : \ s \in \rr \}.$ In particular they have the same cumulative distribution function and so they coincide by the $\pi$-$\lambda$ theorem.
	\end{proof}

\section{Quantitative  stability for Sobolev and log Sobolev inequalities}	

\subsection{One dimensional inequalities}
    The goal of this subsection is to show the following one-dimensional stability result, which we shall deduce from results of \cite{DEFFL22}.
	\begin{theorem}[Stability of Sobolev inequality on $\CD(N-1,N)$ model space]\label{thm:model Sobolev  quant stab}
		There is an universal explicit constant $\beta>0$ such that the following holds. For all $N\in \N$ with $N\ge 3$ and all monotone functions $u\in W^{1,2}(I_N)$,  up to a multiplication of $u$ by $-1$, it holds
		\begin{equation}\label{eq:model Sobolev  quant stab}
			\frac{\beta}{N} \inf_{a>0,\, b\in(-a,a)}  \frac{\|u-v_{a,b}\|_{L^2(\mm_N)}^2+\frac{2^*-2}{N}\|u'-v_{a,b}'\|_{L^2(\mm_N)}^2}{\|u'\|_{L^2(\mm_N)}^2}
			\le   \frac{2^*-2}{N} - \frac{\|u\|_{L^{2^*}(\mm_N)}^2-\|u\|_{L^{2}(\mm_N)}^2}{\|u'\|_{L^{2}(\mm_N)}^2}.
		\end{equation}
	\end{theorem}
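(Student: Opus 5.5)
Since $N\in\N$, $N\ge3$, I will transfer the problem to the round sphere $\mathbb S^N$ and use the quantitative Sobolev stability with explicit constants of \cite{DEFFL22}. By stereographic projection this is equivalent to the sphere version: there is a universal $\beta_0>0$ such that for all $f\in W^{1,2}(\mathbb S^N)$
\[
\frac{\beta_0}{N}\inf_{h\in\mathcal M}\frac{\|f-h\|_{L^2(\mu_N)}^2+A_N\||\nabla (f-h)|\|_{L^2(\mu_N)}^2}{\||\nabla f|\|^2_{L^2(\mu_N)}}\le A_N-\frac{\|f\|_{L^{2^*}(\mu_N)}^2-\|f\|_{L^2(\mu_N)}^2}{\||\nabla f|\|_{L^2(\mu_N)}^2},
\]
where $\mathcal M$ is the manifold of Aubin--Talenti bubbles $\pm U_{a,b,x_0}$. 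It may be necessary to rewrite their normalization, i.e.\ the conformal energy $\|\nabla f\|^2+\frac{N(N-2)}4\|f\|^2$ on the sphere of radius one, in the form above. Because $\frac{N(N-2)}4 A_N=1$, the two deficits agree up to the factor $A_N$. The distance term $\|\nabla(f-h)\|^2+\frac{N(N-2)}{4}\|f-h\|^2$ is, after multiplication by $A_N$, exactly the quantity in \eqref{eq:model Sobolev  quant stab}. Dividing by $\|\nabla f\|^2$ instead of $\|\nabla f\|^2+\frac{N(N-2)}4\|f\|^2$ only improves the left side. The $L^{2^*}$ versus distance normalization in \cite{DEFFL22} also needs scaling care, but everything is homogeneous of degree two, so it should be consistent.

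Given a monotone $u\in W^{1,2}(I_N)$, say non-increasing (otherwise compose with $t\mapsto\pi-t$, which preserves $\mm_N$ and the family $v_{a,b}$ by $b\mapsto -b$), define $f(x)=u(\sfd_{\mathbb S^N}(x,x_0))$. Via polar coordinates $(\sfd(\cdot,x_0))_\sharp\mu_N=\mm_N$ and $|\nabla f|=|u'|\circ\sfd$, so all norms of $f$ equal those of $u$ in $I_N$. Hence the right-hand side of \eqref{eq:model Sobolev  quant stab} equals the sphere deficit of $f$.

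The main step is to show that the infimum over all bubbles on $\mathbb S^N$ may be restricted to bubbles centered at $x_0$ (or $-x_0$), which are exactly $v_{a,b}\circ\sfd(\cdot,x_0)$. This gives the left-hand side. I would argue by symmetrization. Let $h=U_{a,b,y}$ be any bubble. Its Schwarz symmetrization $h^\star$ about $x_0$ (the radial function with the same distribution) is again a bubble centered at $x_0$, since it is a rotated copy of $h$ when $b\ge0$, and otherwise corresponds to centering at $-y$. Since $f$ is radially decreasing, the Hilbert-space distance $\|f-h\|^2_{W}$ equals $\|f\|_W^2+\|h\|_W^2-2\langle f,h\rangle_W$. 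The norms are rotation invariant, so I need $\langle f,h\rangle_W\le\langle f,h^\star\rangle_W$. For the $L^2$ part this is Hardy--Littlewood. For the gradient part I would use the bubble equation $-\Delta h+\frac{N(N-2)}4h=c\,h^{2^*-1}$. Then $\langle f,h\rangle_W=c\int f h^{2^*-1}$, and Hardy--Littlewood applied to the symmetric decreasing $f$ and to $h^{2^*-1}$ gives the inequality. Here the positive sign of $h$, and of $f$ up to adding a constant, requires care. I expect this reduction to be the delicate point. If $u$ changes sign, I would handle it by the sign choice "up to $-1$" and by noting that the Hilbert structure argument works with $f+C$ tested against $h$ only through $\int f h^{2^*-1}$ and $\int h^{2^*-1}$.

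Finally set $\beta=\beta_0$ (adjusted by the normalization constants) to conclude \eqref{eq:model Sobolev  quant stab}.
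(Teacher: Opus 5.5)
Your proposal is correct and follows essentially the paper's route: you transport \cite{DEFFL22} by stereographic projection, lift $u$ radially to $\mathbb S^N$ via $u\circ\sfd(\cdot,x_0)$, and restrict the infimum to centered bubbles using the Euler--Lagrange identity $\langle f,h\rangle_W=c\int f h^{2^*-1}$ together with Hardy--Littlewood. The only difference is that you carry out the Hardy--Littlewood step on $\mathbb S^N$ (cap symmetrization about $x_0$ or $-x_0$) rather than on $\mathbb R^N$ (translation to the origin). On the finite-measure sphere this lets you treat sign-changing $u$ by adding a constant, or by truncating to $\max(f,-k)$ when $u$ is unbounded below, instead of through the $f^+$ argument of \cite[Section 3.2]{DEFFL22} that the paper invokes.
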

The key feature of Theorem \ref{thm:model Sobolev  quant stab} is that the constant in front of the left hand side of \eqref{eq:model Sobolev  quant stab} is explicit. The same result with  non-explicit constant, for all $N>2$ not necessarily integer and without the monotonicity assumption,  can be easily deduced following the same the argument in \cite{BE91} (which relies on a compactness argument). 

   To prove Theorem \ref{thm:model log Sobolev  quant stab} we first need a result on the $N$-dimensional sphere. This says that stability for radially monotone functions can be measured restricting to  bubbles centered at the same point. By convention we will use the embedding $\mathbb S^N=\{x=(x_1,\dots,x_{N+1}) \ : \  |x|=1\}\subset \mathbb R^{N+1}$.
	\begin{proposition}\label{prop:quant stab sphere}
	    Let $U\in W^{1,2}(\ss^N)$ be  radial and monotone around the north pole, that is $U=\phi(x_{N+1})$ for some monotone function $\phi$. Then,  up to a multiplication of $u$ by $-1$, it holds
		\begin{equation}\label{eq:quant stab sphere}
			\begin{split}
				\frac{\beta}{N} \inf_{a>0,\, b\in(-a,a) } & \frac{\|U-G_{a,b}\|_{L^2(\mathbb S^N)}^2+\frac{2^*-2}{N}\|\nabla U-\nabla G_{a,b}\|_{L^2(\mathbb S^N)}^2}{\|\nabla U\|_{L^2(\mathbb S^N)}^2}
				\\
				& \quad \quad \quad \le   \frac{2^*-2}{N} - \frac{\|U\|_{L^{2^*}(\mathbb S^N)}^2-\|U\|_{L^{2}(\mathbb S^N)}^2}{\|\nabla U\|_{L^{2}(\mathbb S^N)}^2},
			\end{split}
		\end{equation}
        where $\beta>0$ is a universal explicit constant and $G_{a,b}(x)\coloneqq (a-bx_{N+1})^\frac{2-N}2.$
	\end{proposition}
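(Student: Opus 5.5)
The plan is to combine the explicit-constant stability theorem of \cite{DEFFL22} on $\rr^N$, transplanted to $\ss^N$ by stereographic projection, with a rearrangement argument. The rearrangement step shows that for a radial monotone $U$ one may restrict the infimum over all bubbles to those centered at the north (or south) pole.

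\textbf{Step 1: transfer to the sphere.} The main theorem of \cite{DEFFL22} gives, for $N\ge 3$ and a universal explicit $\beta>0$,
\[
\|\nabla f\|_{L^2(\rr^N)}^2-S_N\|f\|_{L^{2^*}(\rr^N)}^2\ge \frac{\beta}{N}\inf_{h}\|\nabla(f-h)\|_{L^2(\rr^N)}^2,
\]
where $h$ ranges over all real multiples of Euclidean bubbles. Since $A_N=\frac{2^*-2}{N}=\frac{4}{N(N-2)}$, the quadratic form
\[
Q(V)\coloneqq \|V\|_{L^2(\ss^N)}^2+A_N\|\nabla V\|_{L^2(\ss^N)}^2
\]
is, up to a positive factor, the conformal Laplacian energy. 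Under stereographic projection (with the conformal weight) it therefore corresponds to $\|\nabla f\|_{L^2(\rr^N)}^2$. The $L^{2^*}$ norm is also conformally invariant, and Euclidean bubbles correspond exactly to the functions $c\,(a-b\,x\cdot\omega)^{\frac{2-N}2}$ with $\omega\in\ss^N$, $c\in\rr$, $a>0$, $b\in(-a,a)$.

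Rewriting the Euclidean inequality on $\ss^N$ and dividing by $A_N\|\nabla U\|^2_{L^2(\ss^N)}$ yields \eqref{eq:quant stab sphere}, except that the infimum runs over all centers $\omega$ and all constants $c$. Positive constants $c$ are absorbed into $(a,b)$ by homogeneity. If the infimum is essentially attained with $c<0$, replace $U$ by $-U$; this preserves radiality and monotonicity, and it is the ``up to a multiplication by $-1$'' clause. It therefore suffices to show
\[
\inf_{\omega,a,b} Q(U-G^\omega_{a,b})\ge \inf_{a,b} Q(U-G_{a,b}), \qquad G^\omega_{a,b}(x)\coloneqq (a-b\,x\cdot\omega)^{\frac{2-N}2}.
\]

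\textbf{Step 2: reduction to a correlation inequality.} Expand
\[
Q(U-G^\omega_{a,b})=Q(U)+Q(G^\omega_{a,b})-2\langle U,G^\omega_{a,b}\rangle_Q .
\]
By rotation invariance, $Q(G^\omega_{a,b})$ does not depend on $\omega$. Integrating by parts,
\[
\langle U,G^\omega\rangle_Q=\int_{\ss^N} U\,(G^\omega-A_N\Delta G^\omega)\,\d\mu_N .
\]
By the Euler--Lagrange equation satisfied by bubbles, $G^\omega-A_N\Delta G^\omega=\lambda\,(G^\omega)^{2^*-1}$ for some $\lambda>0$. Hence $H^\omega\coloneqq G^\omega-A_N\Delta G^\omega$ is a positive function of $x\cdot\omega$ alone. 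It is increasing in $x\cdot\omega$ if $b>0$ and decreasing if $b<0$.

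\textbf{Step 3: rearrangement (the main point).} Assume first that $\phi$ is non-decreasing, so that $U$ is symmetric decreasing with respect to the north pole $e_{N+1}$. For $b\ge0$, apply the Hardy--Littlewood inequality for cap symmetrization on $\ss^N$ (Baernstein--Taylor, or two-point rearrangement) to $U+c$, with $c$ large so that $U+c\ge0$. This gives
\[
\int (U+c)H^\omega\le\int (U+c)^\star (H^\omega)^\star = \int (U+c)\,H^{e_{N+1}} .
\]
The terms $c\int H^\omega$ are independent of $\omega$ and cancel, so $\langle U,G^\omega_{a,b}\rangle_Q\le\langle U,G^{e_{N+1}}_{a,b}\rangle_Q$.

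For $b<0$, $G^\omega_{a,b}=G^{-\omega}_{a,-b}$, so the same argument applies after relabelling the center. If $\phi$ is non-increasing, the same argument works with the south pole; its bubbles are $G_{a,-b}$, which still belong to the family $G_{a,b}$ with $b\in(-a,a)$. In every case, for each admissible $(\omega,a,b)$ there is an admissible $(a',b')$ with
\[
Q(U-G_{a',b'})\le Q(U-G^\omega_{a,b}),
\]
and taking infima concludes the proof.

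The main obstacle I expect is Step 1. One must check carefully that the conformal transplantation maps the left side of \cite{DEFFL22} exactly onto $Q$, including the normalization of $\mu_N$ and the value of $A_N$. One must also verify that the deficiency term matches the right side of \eqref{eq:quant stab sphere} after dividing by $A_N\|\nabla U\|^2_{L^2(\ss^N)}$; any constant lost in this conversion is absorbed into $\beta$. Finally, one should confirm that the rearrangement inequality of Step 3 applies to the possibly only $W^{1,2}$, non-positive $U$; the shift by the constant $c$ handles this.
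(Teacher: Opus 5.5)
Your argument is correct. Its core mechanism is the same as the paper's: expand the quadratic form, use the Euler--Lagrange equation $-\Delta g=\lambda g^{2^*-1}$ of bubbles, and apply Hardy--Littlewood. The difference is the order of operations and where the rearrangement is done.

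The paper restricts to centred bubbles on $\rr^N$, before transplanting, using the Euclidean Hardy--Littlewood inequality. That step requires $f\ge 0$, since constants cannot be added on $\rr^N$. The paper therefore has to invoke the reduction of \cite[Section 3.2]{DEFFL22} to cover functions whose positive part is radially non-increasing and carries at least half of the $L^{2^*}$ norm, which worsens $\beta$. It fixes the sign of $U$ by requiring $\|U^+\|_{L^{2^*}}\ge\frac12\|U\|_{L^{2^*}}$, together with a swap of the poles.

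You instead transplant the full, non-symmetric inequality of \cite{DEFFL22} to $\ss^N$, and the transfer is exact. Indeed $Q(F)=A_N|\ss^N|^{-1}\|\nabla \cS F\|^2_{L^2(\rr^N)}$, and Euclidean bubbles correspond exactly to $c(a-b\,x\cdot\omega)^{\frac{2-N}{2}}$ with $a>|b|$. You choose the sign of $U$ according to the sign of the optimal multiple $c$, and only then restrict the centre on the sphere via cap symmetrization. Because the measure on $\ss^N$ is finite, adding constants is harmless. So sign-changing $U$ are handled directly, without \cite[Section 3.2]{DEFFL22} and with no loss in $\beta$. The only extra input is Hardy--Littlewood for cap rearrangements on $\ss^N$, which follows from the usual layer-cake argument.

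One small repair is needed. $U$ need not be bounded below; for example $\log(1+x_{N+1})\in W^{1,2}(\ss^N)$ for $N\ge 3$. So ``choose $c$ with $U+c\ge 0$'' is not always possible. Two fixes work:
\begin{itemize}
\item Apply your argument to $\max(U,-M)$, which is still a monotone function of $x_{N+1}$, and let $M\to\infty$. This passes to the limit because $H^\omega=\lambda (G^\omega)^{2^*-1}$ is bounded.
\item Alternatively, split $U=U^+-U^-$ and use Hardy--Littlewood for $U^+$ and its reverse (oppositely ordered) form for $U^-$.
\end{itemize}
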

	\begin{proof}
 We will pull back \eqref{eq:quant stab sphere} from a result on $\rr^N$ via the stereographic projection. In \cite[Theorem 1.1]{DEFFL22} it is shown that
         \begin{equation}\label{eq:stab sobolev rn}
            \|\nabla f\|_{L^{2}(\rr^N)}^{2}-|\ss^N|^\frac{2}{N}\frac{N}{2^*-2}\|f\|_{L^{2^*}(\rr^N)}^2\ge \frac \beta N \inf_{(x_0,c,d)\in \rr^n\times \rr \times \rr} \|\nabla f-\nabla g_{x_0,c,d}\|^2_{L^2(\rr^N)},
        \end{equation}
        where  $g_{x_0,c,d}(x)\coloneqq \frac{c}{(d+|x-x_0|^2)^\frac{N-2}2}$ and $\beta>0$ is a universal explicit constant. Suppose now that $f$ is non-negative and radially monotone non-increasing around the origin $O\in \rr^N.$ We claim that the infimum on the right hand side of \eqref{eq:stab sobolev rn} can be taken only among functions centered at the origin. This relies on the following identity:
        \begin{equation}\label{eq:pde trick}
        \begin{split}
             &\|\nabla f-\nabla g_{x_0,c,d}\|_{L^2(\rr^N)}^2=\|\nabla f\|_{L^2(\rr^N)}^2+\|\nabla g_{x_0,c,d}\|_{L^2(\rr^N)}^2+2\int_{\rr^n}f\Delta  g_{x_0,c,d}\\
             & \quad =\|\nabla f\|_{L^2(\rr^N)}^2+\|\nabla g_{x_0,c,d}\|_{L^2(\rr^N)}^2-\frac{|\ss^N|^\frac{2}{N}}{\| g_{x_0,c,d}\|_{L^{2^*}(\rr^N)}^{2^*-2}}\frac{N}{2^*-2}\int_{\rr^n}f g_{x_0,c,d}^{2^*-1},
        \end{split}
        \end{equation}
        (see e.g.\ the proof of \cite[Lemma 3.3]{DEFFL22}). The key observation is that by the Hardy-Littlewood inequality  we have that 
        \[
        \int_{\rr^n}f g_{x_0,c,d}^{2^*-1}\le \int_{\rr^n}f g_{O,c,d}^{2^*-1},
        \]
        which combined with \eqref{eq:pde trick}  and \eqref{eq:stab sobolev rn} shows 
         \begin{equation}\label{eq:stab sobolev rn radial}
            \|\nabla f\|_{L^{2}(\rr^N)}^{2}-|\ss^N|^\frac{2}{N}\frac{N}{2^*-2}\|f\|_{L^{2^*}(\rr^N)}^2\ge \frac \beta N \inf_{(c,d)\in \rr \times (0,\infty)} \|\nabla f-\nabla g_{O,c,d}\|_{L^2(\rr^N)},
        \end{equation}
        proving the claim. { Next, arguing exactly as in \cite[Section 3.2]{DEFFL22}, we can deduce that \eqref{eq:stab sobolev rn radial}, up to a change of the constant $\beta$, holds more generally when $f^+$ (the positive part of $f$) is non-negative, radially non-increasing and satisfies $\|f^+\|_{L^{2^*}(\rr^n)}\ge 1/2\|f\|_{L^{2^*}(\rr^n)} $.}

	    Denote by $S:\rr^{N} \to \ss^N\subset \rr^{N+1}$ the inverse of stereographic projection with pole $e_{N+1}$,  that is 
        \[
          \rr^N \ni x=(x_1,\dots,x_{N})\mapsto S(x)=\bigg(\frac{2x_1}{1+|x|^2},\dots,\frac{2x_N}{1+|x|^2},\frac{1-|x|^2}{1+|x|^2}\bigg).
        \]
    For all $F \in W^{1,2}(\ss^N)$ the function $\cS F(x)\coloneqq F(S(x))\left(\frac2{1+|x|^2}\right)^\frac{N-2}{2}$ belongs to $W^{1,2}(\rr^N)$ and the following identities hold
    \begin{equation}\label{eq:stero identities}
        \begin{split}
                &|\ss^N|^{\frac2N-1}\|\cS F\|_{L^{2^*}(\rr^N)}^2=\|F\|_{L^{2^*}(\ss^N)}^2\\
            &|\ss^N|^{-1}\|\nabla \cS F\|_{L^{2}(\rr^N)}^{2}=\|\nabla F\|_{L^{2}(\ss^N)}^2+\frac{N}{2^*-2}|\|F\|_{L^{2}(\ss^N)}^2
        \end{split}
    \end{equation}
        (see \cite[Section 2.1]{DEFFL22}).
        Moreover, for all $c\in \rr, d>0$ we can check that taking $b=(1-d)|c|^\frac{2}{2-N}, a=(1+d)|c|^\frac{2}{2-N}$ it holds
        \begin{equation}\label{eq:bubble=bubble}
            \cS ({\rm sign} (c) G_{a,b}) =g_{O,c,d}.
        \end{equation}
        Let now $U\in W^{1,2}(\ss^N)$ be as in the hypotheses, i.e.\ $U(x)=\phi(x_{N+1})$ with $\phi $ monotone. 
         Up to a multiplication of $U$ by $-1$, we can assume that $\|U^+\|_{L^{2^*}}\ge 1/2 \|U\|_{L^{2^*}}.$ Then, up to interchanging the north and south pole we can assume that $\phi$ is monotone non-decreasing.   Note that
        \[
        \cS U(x)=\phi\bigg(\frac{1-|x|^2}{1+|x|^2}\bigg)\left(\frac2{1+|x|^2}\right)^\frac{N-2}{2}.
        \]
        Then $(\cS U)^+=\cS (U^+)$ is a radially non-increasing function, as  $t\mapsto \frac{1-t^2}{1+t^2}$ is monotone non-increasing. {By the first in  \eqref{eq:stero identities} we have $\|(\cS U)^+\|_{L^{2^*}(\rr^n)}=\|\cS (U^+)\|_{L^{2^*}(\rr^n)}\ge 1/2\|\cS U\|_{L^{2^*}(\rr^n)} $. }   Hence  we can apply \eqref{eq:stab sobolev rn radial}  with $f=\cS U$, which combined with \eqref{eq:stero identities} and \eqref{eq:bubble=bubble}  shows \eqref{eq:quant stab sphere}.
	\end{proof}

We can now prove the main result of this section.
    \begin{proof}[Proof of Theorem \ref{thm:model Sobolev  quant stab}]
    We want to  push the result of Proposition \ref{prop:quant stab sphere} into the one-dimensional model space $I_N$.  To do so we consider the map $T:\mathbb S^N\to [0,\pi]$ given by $T(x_1,\dots,x_{N+1})=\arccos (x_{N+1})$.
	It is easy to check that for all $v \in W^{1,2}([0,\pi])$ we have $v\circ T\in W^{1,2}(\ss^N)$ and 
		\begin{equation}\label{eq:T isometry}
			\| v\circ T\|_{L^2(\mu)}=\|v\|_{L^2(\mm_N)}, \quad \quad 	\| \nabla v\circ T\|_{L^2(\mu)}=\|v'\|_{L^2(\mm_N)}, \quad \forall v \in \LIP([0,\pi]).
		\end{equation}
Moreover $v\circ T$ is radial around the north pole and it is monotone if and only if $v$ is monotone. Using the definitions we can also check the following
\begin{equation}\label{eq:G=v}
    G_{a,b}(x)=\frac{1}{(a-b x_{N+1})^\frac{N-2}{2}}=\frac{1}{(a-b \cos(T(x))^\frac{N-2}{2}}=v_{a,b}(T(x)) , \quad \forall x \in \mathbb S^N.
\end{equation}
	Combining \eqref{eq:G=v}, \eqref{eq:T isometry} and \eqref{eq:quant stab sphere} gives immediately \eqref{eq:model Sobolev  quant stab}. 
	\end{proof}

The following stability result for the LSI in the Gaussian model space is  already contained in \cite[Corollary 1.2]{DEFFL22}, and shall be used to prove Theorem \ref{thm:main result log sobolev}.
	\begin{theorem}[Stability of log-Sobolev inequality on $\CD(1,\infty)$ model]\label{thm:model log Sobolev  quant stab}
		There is an universal explicit constant $\beta>0$ such that the following holds. For all $u\in W^{1,2}(I_\infty)$  it holds
		\begin{equation}\label{eq:model log Sobolev  quant stab}
			 \beta\inf_{b\in \rr ,\, c\in \rr }  \int |u-ce^{bt}|^2 \d \gamma 
			\le  \int |u'|^2 \d \gamma -\frac12 \int u^2 \log \left (\frac{u^2}{\|u\|_{L^2(\gamma)}^2}\right)\d \gamma.
		\end{equation}
	\end{theorem}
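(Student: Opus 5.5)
The plan is to read the statement off \cite[Corollary 1.2]{DEFFL22}, which gives quantitative stability of the Gaussian log-Sobolev inequality on $(\rr^n,\gamma_n)$ with a dimension-free explicit constant. Roughly, it states that for $f\in W^{1,2}(\gamma_n)$,
\[
\int|\nabla f|^2\d\gamma_n-\frac12\int f^2\log\Big(\frac{f^2}{\|f\|^2_{L^2(\gamma_n)}}\Big)\d\gamma_n\ \ge\ \beta\inf_{b\in\rr^n,\,c\in\rr}\int|f-ce^{b\cdot x}|^2\d\gamma_n .
\]
I would take $n=1$. Then $I_\infty=(\rr,\sfd_e,\gamma)$ is exactly the one-dimensional Gaussian space, and the optimizers $ce^{b\cdot x}$ become $ce^{bt}$.

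Before doing that, I would check that the notions of Sobolev space agree. By the one-dimensional characterization recalled in the background section, $u\in W^{1,2}(I_\infty)$ means $u\in W^{1,1}_{loc}(\rr)$ with $u,u'\in L^2(\gamma)$ and $|\nabla u|=|u'|$. This is the same as the weighted Sobolev space $W^{1,2}(\gamma)$ used in \cite{DEFFL22}, so the energy in \eqref{eq:model log Sobolev  quant stab} matches theirs.

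Next I would handle normalizations. The left-hand side of \eqref{eq:model log Sobolev  quant stab} and the deficit are both $2$-homogeneous in $u$: replacing $u$ by $\lambda u$ multiplies each by $\lambda^2$. The log term is invariant because of the $\|u\|_{L^2}^2$ inside the logarithm. So one may assume $\|u\|_{L^2(\gamma)}=1$, or use whatever normalization the cited corollary requires. The case $u=0$ is trivial, since both sides vanish.

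The main obstacle is making sure the form of \cite[Corollary 1.2]{DEFFL22} matches. It might be stated with the deficit divided by $\|u\|_{L^2}^2$, or with a relative distance $\inf\|u-ce^{bt}\|^2/\|u\|^2$, or through a different parametrization of the Gaussian optimizers, for instance $ce^{b\cdot x-|b|^2/2}$. Each of these differs from the present statement only by homogeneity or by absorbing a factor into $c$, so I would track the resulting constant. If instead the corollary is formulated for the Gaussian obtained as a limit of spheres in large dimension, I would recall how it is derived there: apply the explicit Sobolev stability \eqref{eq:stab sobolev rn} on $\ss^N$ (via \eqref{eq:stero identities}) to functions depending on finitely many coordinates, and pass to the limit $N\to\infty$. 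The fact that the constant $\beta/N$ is paired with $A_N\sim 4/N$ is what lets a dimension-free $\beta$ survive that limit. In either case the argument reduces to this citation, with $\beta$ possibly rescaled by an absolute factor.
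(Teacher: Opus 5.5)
Your proposal matches the paper, which also obtains this statement directly from \cite[Corollary 1.2]{DEFFL22} in dimension one. Your extra checks are exactly the routine bookkeeping the paper leaves implicit: identifying $W^{1,2}(I_\infty)$ with the weighted space $H^1(\gamma)$, and using the $2$-homogeneity of both sides to absorb normalizations into $c$.
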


\subsection{Proof of the main stability result}
Combining the estimates in the one-dimensional setting of the previous section with a rearrangement procedure we  now prove the main stability result for the Sobolev and log Sobolev inequalities.
    \begin{proof}[Proof of Theorem \ref{thm:main result sobolev}]
        Let $u \in W^{1,2}(\X)$ be as in the assumptions and consider its rearrangement $u^*$, as in Section \ref{sec:rearrangement}, in the model space $I_N$. Since the rearrangement preserves the $L^p$-norms, applying Theorem \ref{thm:polya} combined with the isoperimetric inequality in Theorem \ref{thm:isop N} yields
        \begin{equation}\label{eq:small deficit}
             1 -  \frac{N}{2^*-2}\frac{\|u^*\|_{L^{2^*}(\mm_N)}^2-\|u^*\|_{L^{2}(\mm_N)}^2}{\|(u^*)'\|_{L^{2}(\mm_N)}^2}\le   1 -  \frac{N}{2^*-2} \frac{\|u\|_{L^{2^*}(\mm)}^2-\|u\|_{L^{2}(\mm)}^2}{\||\nabla u|\|_{L^{2}(\mm)}^2}\le \eps.
        \end{equation}
        We claim that,  up to a multiplication of $u$ by $-1$,
        \begin{equation}\label{eq:L2 stab sobolev}
            \inf_{a>1,\, |b|=\sqrt{a^2-1}} \|u^*-v_{a,b}\|_{L^{2^*}(\mm_N)}^2< \frac{2N}{\beta}  \eps,
        \end{equation}
        where $\beta$ is an absolute positive constant. This would be sufficient to conclude. Indeed by Lemma \ref{lem:le trick}
        \begin{equation}\label{eq:coupling arg sob}
               W_{2^*}(u_\sharp\mm , (v_{a,b})_\sharp\mm_N  )=   W_{2^*}( (u^*)_\sharp\mm_N ,  (v_{a,b})_\sharp\mm_N)  \leq \|u^*-v_{a,b}\|_{L^{2^*}(\mm_N)},
        \end{equation}
        where we used $(u^*,v_{a,b})_\sharp\mm_N\in P(\rr^2)$ as admissible coupling.
        Inequality \eqref{eq:coupling arg sob} combined  with \eqref{eq:L2 stab sobolev} gives the required estimate.
        
       To show the above claim we note first that
        $$\lim_{a\to 1^+}\|u^*-v_{a,\sqrt{a^2-1}}\|_{L^{2^*}(\mm_N)}=\|u^*-1\|_{L^{2^*}(\mm_N)}\le\|u^*\|_{L^{2^*}(\mm_N)} +1 = \|u\|_{L^{2^*}(\mm)} +1\le  2.$$
       Hence \eqref{eq:L2 stab sobolev} trivially holds if $\eps \ge 1/2$, modulo changing the constant $\beta.$

        From now on we assume that $\eps <1/2.$
        Because $u^*$ is monotone non-increasing by construction, we can apply Theorem \ref{thm:model Sobolev  quant stab} combined with the  Sobolev inequality  \eqref{eq:sharp sobolev IN} to obtain
        \begin{equation}\label{eq:asdfg}
           \inf_{a>0,\, b\in(-a,a)}  \frac{\|u^*-v_{a,b}\|_{L^{2^*}(\mm_N)}^2}{\|(u^*)'\|_{L^2(\mm_N)}^2}\le  \frac{N}{\beta}   \frac{2^*-2}{N}\eps,
        \end{equation}
        modulo multiplication of $u^*$ by $-1.$
      Rearranging \eqref{eq:small deficit}     we can estimate $\|u'\|_{L^2(\mm_N)}^2$ as follows
         \begin{equation}\label{eq:derivative bound}
                 \|(u^*)'\|_{L^{2}(\mm_N)}^2\le \frac{N}{(2^*-2)(1- \eps)}\left(\|u^*\|_{L^{2^*}(\mm_N)}^2-\|u^*\|_{L^{2}(\mm_N)}^2\right)  \le   \frac{2N}{(2^*-2)},
         \end{equation}
         because $\|u^*\|_{L^{2^*}(\mm_N)}=1.$ Combining \eqref{eq:derivative bound} and \eqref{eq:asdfg} we obtain
         \begin{equation}\label{eq:baaaa}
            \inf_{a>0,\, b\in(-a,a)} \|u^*-v_{a,b}\|_{L^{2^*}(\mm_N)}^2< \frac{2N}{\beta}  \eps,
        \end{equation}
          modulo multiplication of $u^*$ by $-1.$ To get \eqref{eq:L2 stab sobolev}, we still have to show that we can take $a > 1$ and $|b|=\sqrt{a^2-1}$. Consider $a>0$ and $b\in(-a,a)$ so that \eqref{eq:baaaa} holds without the inf. Then $|\|v_{a,b}\|_{L^{2*(\mm)}}- 1 |\le \sqrt{\frac{2N}{\beta}  \eps}$. Taking $\tilde v_{a,b}\coloneqq \|v_{a,b}\|_{L^{2*(\mm)}}^{-1}v_{a,b}$, we have that $\tilde v_{a,b}=v_{\bar a, \pm \sqrt{\bar a^2-1}}$ for some $\bar a>1$ and with $\pm$ equals to ${\rm sign}(b)$ (recall \eqref{eq:v 2*norm formula}). Hence
        \[
        \|u^*-v_{\bar a, \pm\sqrt{\bar a^2-1}}\|_{L^{2^*}(\mm_N)}\le \|u^*-v_{a,b}\|_{L^{2^*}(\mm_N)} +  \sqrt{\frac{2N}{\beta}  \eps}\le  2\sqrt{ \frac{2N}{\beta}  \eps},
        \]
         modulo multiplication of $u^*$ by $-1.$ This shows \eqref{eq:L2 stab sobolev} up to a change of the constant $\beta.$
    \end{proof}

We now consider the log-Sobolev inequality.
\begin{proof}[Proof of Theorem \ref{thm:main result log sobolev}]
Let $u$ be as in the assumptions.
    Consider its rearrangement $u^*$ in the model space $I_\infty$, as in Section \ref{sec:rearrangement}. Then, since the rearrangement preserves the $L^p$-norms, by \eqref{eq:composition rearr G} applied with $G(t)=t\log(t)$ for $t\ge 0$ and $G(t)=0$ for $t\le 0,$  applying Theorem \ref{thm:polya} combined with the isoperimetric inequality in Theorem \ref{thm:isop inf} we get
    \begin{equation}
      \int |(u^*)'|^2 \d \gamma -\frac12 \int (u^*)^2 \log ((u^*)^2)\d \gamma\le    \int |\nabla u|^2 \d \mm -\frac12 \int u^2 \log (u^2)\d \mm\eqqcolon\eps.
    \end{equation}
    Therefore by Theorem \ref{thm:model log Sobolev  quant stab} and  modulo a  multiplication of $u^*$ by $-1,$
    \begin{equation}\label{eq:eqqq346}
        \inf_{b\in \rr\setminus\{0\} ,\, c>0 }  \int |u-ce^{bt}|^2 \d \gamma 
			\le\frac{\eps} {\beta}.
    \end{equation}
    The value $b=0$ in \eqref{eq:eqqq346} can be omitted because $ce^{bt}\to c$  in $L^2(\gamma)$ as $b\to 0$. We claim that in \eqref{eq:eqqq346} the infimum can be restricted to $c=e^{-b^2}$, up to a change of  $\beta.$  We first assume that $\frac{\eps} {\beta} \le \frac14 $. Note that by \eqref{eq:eqqq346} and the fact that $\|ce^{bt}\|_{L^2(\gamma)}=ce^{b^2}$, we deduce that
    \[
     1- \sqrt{\frac{\eps} {\beta}}\le e^{-b^2}e^{b^2}\le 1+ \sqrt{\frac{\eps} {\beta}}.
    \]
    Therefore
    \[
    \|ce^{bt}- e^{-b^2}e^{bt}\|_{L^2(\gamma)}=|c-e^{-b^2}|e^{b^2}=|ce^{b^2}- 1|\le \sqrt{\frac{\eps} {\beta}},
    \]
    which shows the claim.
    Hence using Lemma \ref{lem:le trick} as we did in \eqref{eq:coupling arg sob} we obtain
    $$\inf_{b\in \rr\setminus\{0\} }  W_2^2(u_\sharp\mm , (e^{-b^2}e^{bt})_\sharp\gamma)=W_2^2((u^*)_\sharp\mm_N , (e^{-b^2}e^{bt})_\sharp\gamma)\le  \frac{\eps} {\beta},$$
    up to increasing the constant $\beta.$
    The conclusion follows noting that $ (e^{-b^2}e^{bt})_\sharp\gamma$ is a Lognormal distribution with parameters $\mu=-b^2$ and $\sigma=|b|.$
\end{proof}

\subsection{Optimality}\label{sec:opt}

The following examples show that the order of magnitude $\sqrt{\eps}$ in  Theorem \ref{thm:main result sobolev} and Theorem \ref{thm:main result log sobolev} is optimal. 

\medskip

\noindent \textsc{Example 1}:
As in Section \ref{sec:rearrangement}, we denote by $F_\gamma$ the Gaussian cumulative distribution function. Let
$$
u_\eps(t) := \begin{cases}
    &\frac{\exp((1+\sqrt{\eps})t)}{\sqrt{L_\eps}}\text{ if } t \geq 0 \\
    &\frac{\exp((1-\sqrt{\eps})t)}{\sqrt{L_\eps}} \text{ if } t < 0,
\end{cases}
$$
where $L_\eps=e^{2(1+\sqrt{\eps})^2}F_\gamma(2+2\sqrt{\eps}) + e^{2(1-\sqrt{\eps})^2}F_\gamma(-2+2\sqrt{\eps})$ and $\eps>0.$

Since $\int_0^{+\infty}{\exp(cx)d\gamma} = e^{c^2/2}F_\gamma(c),$
we have
$\int{u_\eps^2d\gamma} = 1.$
Moreover an easy computation  yields
\[
\int_{-\infty}^{+\infty} |\nabla u_\eps|^2\d \gamma=1+\eps +2\sqrt{\eps} (I_+-I_-),
\]
where $I_+\coloneqq \int_0^\infty u_\eps^2\d\gamma$ and $I_-\coloneqq \int_{-\infty}^0 u_\eps^2\d\gamma$. Similarly
\begin{align*}
    \frac12\int u_\eps^2\log(u_\eps^2)\d \gamma&=\int  t u_\eps^2(t)\d \gamma(t)+ \sqrt{\eps} \left(\int_0^\infty tu_\eps^2\d\gamma -\int_{-\infty}^0 tu_\eps^2\d\gamma\right)-\log(\sqrt{L_\eps})\\
    &= 2 + 4\sqrt{\eps}(I_+-I_-)+2\eps + \frac{2\sqrt\eps}{L_\eps \sqrt{2\pi}}-\frac12\log({L_\eps}),
\end{align*}
where we integrated by parts in the second line. By a straightforward Taylor expansion, the $\sqrt{\eps}$-terms cancel out and we obtain
\[
\int |\nabla u_\eps|^2\d \gamma-  \frac12\int u_\eps^2\log(u_\eps^2)\d \gamma= O(\eps)
\]

Moreover, $u_\eps(t)$ can be written as a non-decreasing function of $e^{\sigma t-\sigma^2}$ as soon as $\sigma > 0$ (which can be assumed without loss of generality). Since the quadratic optimal transport map for real-valued distributions is the unique non-decreasing map sending one distribution onto the other, we have
\begin{align*}
W_2&((u_\eps)_\sharp \gamma, L_\sigma)^2 = \int{|u_\eps(t) - e^{\sigma t-\sigma^2}|^2d\gamma}= 2-2\int{u_\eps(t)e^{\sigma t-\sigma^2}d\gamma} \\
&= 2-2e^{-\sigma^2}L_\eps^{-1/2}\left(\int_{t > 0}{e^{(1+\sqrt{\eps} + \sigma)t}d\gamma(t)} + \int_{t<0}{e^{(1-\sqrt{\eps}+\sigma)t}d\gamma(t)}\right) = 2\left(1 - \frac{ g(\eps,\sigma) }{\sqrt{L_\eps}}\right),
\end{align*}
where
\begin{equation}\label{eq:g formula}
    g(\eps,\sigma) := e^{(1+\sqrt{\eps} +\sigma)^2/2-\sigma^2}F_\gamma(1+\sqrt{\eps} + \sigma) + e^{(1-\sqrt{\eps}+\sigma)^2/2-\sigma^2}F_\gamma(-1+\sqrt{\eps}-\sigma).
\end{equation}
Minimizing $W_2((u_\eps)_\sharp \gamma, L_\sigma)^2$ boils down to maximizing $ g(\eps,\sigma)$ with respect to $\sigma\in (0,\infty)$.
Since $F_\gamma'(t) = e^{-t^2/2}/\sqrt{2\pi}$, we have
$$\partial_2g(\eps,\sigma) = e^{-\sigma^2}\left((1+\sqrt{\eps}-\sigma)e^{(1+\sqrt{\eps}+\sigma)^2}F_\gamma(1+\sqrt{\eps} + \sigma) + (1-\sqrt{\eps}-\sigma)e^{(1-\sqrt{\eps}+\sigma)^2}F_\gamma(-1+\sqrt{\eps} - \sigma)\right).$$
Since $F_\gamma>0$, immediate considerations on the sign show that $\partial_2g(\sqrt{\eps},\sigma)> 0$ for $\sigma\in (0,1-\sqrt{\eps})$ and that  $\partial_2g(\sqrt{\eps},\sigma)$ can only vanish for $\sigma \in [1-\sqrt \eps, 1+\sqrt \eps]$. Moreover we can easily compute $\lim_{\sigma \to +\infty}g(\eps,\sigma)=0$. Therefore $g(\eps,\sigma)$ (with fixed $\eps$) posses a global maximum $\sigma_\eps\in [1-\sqrt\eps,1+\sqrt\eps]$. In other words $\sigma_\eps=1+\alpha_\eps$ for some $\alpha_\eps \in [-\sqrt \eps ,\sqrt \eps]$. Substituting this into \eqref{eq:g formula}, by Taylor expansion and the fact that $F_\gamma(2) + F_\gamma(-2) = 1$,
we end up with
$$g(\eps,\sigma_\eps) = e\left(1+   \sqrt \eps A + \frac{5}{2}\eps-\frac{\alpha_\eps^2}2+\sqrt \eps \alpha_\eps B+ o(\eps)\right ),$$
where $A=\left[2F_\gamma(2)-2F_\gamma(-2) + 2e^{-2}(2\pi)^{-1/2}\right]$ and $B=[F_\gamma(2)-F_\gamma(-2)]$ are  absolute constants.
Moreover, 
$$L_\eps = e^2\big([1+2\sqrt{\eps}A] + 10\eps + o(\eps)]\big).$$
Substituting the above expression in the formula for $W_2((u_\eps)_\sharp \gamma, L_{\sigma_\eps})^2$ and using once more a Taylor expansion, the $\sqrt \eps$-terms cancel out and we get 
$$W_2((u_\eps)_\sharp \gamma, L_{\sigma_\eps})^2=[5-A^2-2B\frac{\alpha_\eps}{\sqrt{\eps}}+\frac{\alpha_\eps^2}{\eps}]\eps + o(\eps)\ge[5-A^2-B^2]\eps + o(\eps), $$
where we used that $-2Bt+t^2\ge B^2$ for all $t\in \rr.$ Since $5-A^2-B^2\sim 0.02...$ we conclude.

\medskip

\noindent \textsc{Example 2}:
In the model space $I_N$, for each $\eps\ge 0$ we consider the function
$$
u_\eps(t) :=\begin{cases}
    & c_\eps\left(1+\frac{1}{2}\cos(t)+\sqrt{\eps}\cos(t)\right)^\frac{2-N}{2}\text{ if } t \in [0,\pi/2] \\
    & c_\eps\left(1+\frac{1}{2}\cos(t)-\sqrt{\eps}\cos(t)\right)^\frac{2-N}{2}\text{ if } t \in [\pi/2,\pi].
\end{cases}
$$
where $c_\eps$ is the constant so that $\|u_{\eps}\|_{L^{2^*}(\mm_N)}=1.$ For $\eps$ small enough $u_\eps$ is monotone non-decreasing. Note that $u_0=c_0(1+\cos(t)/2)^\frac{2-N}{2}$ coincides with the bubble $v_{1,-1/2}$ as defined in \eqref{eq:model bubble}. Straightforward Taylor expansions give
\[
\int u_\eps^2\d \mm_N=\int u_0^2\d \mm_N + \sqrt{\eps} \int u_0^2\left(\frac{2J}{2^*}+(2-N) g\right)\d \mm_N + O(\eps);
\]
\[
\int (u_\eps')^2\d \mm_N=\int (u_0')^2\d \mm_N + \sqrt{\eps} \int \frac{2J}{2^*}(u_0')^2+(2-N)u_0'(u_0g)'\d \mm_N + O(\eps),
\]
where $g(t)\coloneqq\frac{|\cos(t)|}{1+\cos(t)/2}\in \LIP([0,\pi])$ and $J\coloneqq N\int (u_0)^{2^*}g\d \mm_N.$ Since $u_0$ is a bubble, it satisfies equality in \eqref{eq:sharp sobolev IN} and the equation $-A_N\Delta u_0=u_0^{2^*-1}-u_0$. Using these facts and integrating by parts it can be easily checked that
\[
 \int u_0^2\left(\frac{2J}{2^*}+(2-N) g\right)\d \mm_N=A_N\int \frac{2J}{2^*}(u_0')^2+(2-N)u_0'(u_0g)'\d \mm_N.
\]
Hence we obtain that
\[
1=\|u_{\eps}\|_{L^{2^*}(\mm_N)}^2=A_N\|u_\eps'\|_{L^2(\mm_N)}^2+\|u_\eps\|_{L^2(\mm_N)}^2 + O(\eps),
\]
which combined with the fact that $\liminf_n \|u_\eps'\|_{L^2(\mm_N)}^2\ge \int (u_0')^2\d \mm_N >0$ gives that $u_\eps$ satisfies the reverse Sobolev inequality \eqref{eq:reverse sobolev} in the assumptions of Theorem \ref{thm:main result sobolev}, up to a multiplication of $\eps$ by a uniform constant factor depending only on $N$. On the other hand one can check that for any bubble $v_{a,-\sqrt{a^2-1}}$ with $a>1$ (defined as in Section \ref{sec:modelspaces}) it holds $\int |u_\eps-v_{a,-\sqrt{a^2-1}}|^2\d \mm_N\ge c\eps$ for a positive constant $c$ independent of $a$ and $\eps.$ This follows from the Taylor expansion $u_\eps=u_0 + \sqrt{\eps} (u_0\frac{J}{2^*}+\frac{2-N}{2}g)+ O(\eps)$ and explicit computations. Since any $\mu_A$ is of the form $( v_{a,-\sqrt{a^2-1}})_\sharp\mm_N$ for some $a>1$ and both $u_\eps$ and $v_{a,-\sqrt{a^2-1}}$ are monotone non-decreasing, we obtain that
\[
\inf_{A>1}  W_2((u_\eps)_\sharp \mm_N, \mu_{A})^2=\int |u_\eps-v_{a,-\sqrt{a^2-1}}|^2\d \mm_N\ge c\eps,
\]
which shows the sharpness of $\sqrt{\eps}$ in Theorem \ref{thm:main result sobolev}

\section{Stability of the spectral gap under various curvature dimension conditions}\label{sec:gap}

In this section we apply our method to obtain quantitative stability results for first the Laplacian eigenvalue. The goal is to prove Theorem \ref{thm: main result spectral gap} below, which is the analog of Theorems \ref{thm:main result sobolev} and \ref{thm:main result log sobolev} but for the spectral gap instead of Sobolev inequalities. It also includes the case of spaces with positive curvature and negative effective dimension. Let us start by introducing the relevant terminology and recall some known results.

In a metric measure space $\Xdm$ with finite measure the first eigenvalue of the Laplacian, also called \textit{spectral gap}, is given by
\begin{equation}\label{eq:spectral gap}
    \lambda_1(\X)\coloneqq \inf \left\{  \frac{\int |\nabla u|^2 \,\d \mm }{\int u^2\,\d \mm} \ : \  u \in W^{1,2}(\X)\setminus\{0\},\, \int u\, d \mm=0 \right\}.
\end{equation}
If $\X=M$ is a smooth closed Riemannian manifold then $\lambda_1(M)$ is precisely the first non-trivial eigenvalue of the Laplace-Beltrami operator in $M$ and a minimizer of \eqref{eq:spectral gap} satisfies $\Delta_M u=-\lambda_1(M) u.$

We collect below the known lower bounds for  $ \lambda_1(\X)$ under different curvature-dimension conditions, alongside their respective rigidity statements. 

\medskip

\noindent -For \textit{essentially non branching $\CD(N-1,N)$ spaces}, $N>1$, it holds
\begin{equation}\label{eq:lich}
    \lambda_1(\X)\ge N
\end{equation}
and equality holds if and only if $\X$ is a spherical suspension, in which case  $\lambda_1(\X)$ is achieved by $u=\cos (\sfd(x_0,\cdot))$, where $x_0$ is any of the tips of the suspension (see \cite{Ket15}).  The resulting push forward distribution $\nu_N\coloneqq u_\sharp\mm =Z_N^{-1}(1-t^2)^{N/2-1}\restr{[-1,1]}\d t$ is the symmetrized Beta distribution with parameters $(N/2, N/2)$.   For smooth Riemannian manifolds \eqref{eq:lich} is known as the Lichnerowicz inequality, and rigidity in that context was shown by Obata \cite{Obata62}.

\medskip

\noindent -For \textit{$\RCD(1,\infty)$ with $\mm(\X)=1$} it holds
\begin{equation}\label{eq:lichinf}
    \lambda_1(\X)\ge 1
\end{equation}
and equality holds if and only if $\Xdm \simeq (\rr,\sfd_e,\gamma) \otimes (Y,\sfd_Y,\mm_Y), $ in which case  $\lambda_1(\X)$ is achieved by $u(t,y)=t$ \cite{GKKS20}.  The resulting push forward distribution $u_\sharp\mm $ is  the standard Gaussian. In smooth weighted Riemannian manifolds the rigidity was shown in \cite{ChengZhou17}.

\medskip

\noindent -For \textit{weighted $\CD(N+1,-N)$ Riemannian manifolds $(M,g,\mu)$ with $\mu(M)=1$}, $N>1$,  it holds
\begin{equation}
    \lambda_1(\X)\ge N
\end{equation}
(see \cite{KM17}). Equality holds if and only if $(M,g,\mu)$ is isomorphic to a warped product $(\rr \times_{\cosh (t)} \Sigma,\cosh^{-N-1}(t)\d t \otimes \mm_{\Sigma})$, in which case  $\lambda_1(\X)$ is achieved by $u(t,y)=\sinh(t)$ \cite{Mai}.  The  push forward distribution $u_\sharp\mu$ becomes $\nu_{-N}\coloneqq Z_{-N}^{-1}(1+t^2)^{-N/2-1}\d t$.

We are now ready to state our stability result for the spectral gap.

\begin{theorem}\label{thm: main result spectral gap}
\begin{enumerate}[label=\roman*)]
  \item     Let $\Xdm$ be an essentially non-branching $\CD(N-1,N)$ space with $\mm(\X)=1$ and $N>1.$ If $u\in W^{1,2}(\X)$ satisfies 
  $$\int{u}\,\d \mm = 0; \quad \int u^2\d \mm = \frac{1}{N+1};\quad  \int{|\nabla u|^2\d \mm} \leq \frac{(1+\eps)N}{N+1}$$
  for some $\eps>0$, then 
  \begin{equation}\label{eq:gap stab positive N}
      W_2(u_\sharp\mm , \nu_N) \le  \sqrt{\frac{2\eps}{N+1}},
    \end{equation}
  where $\nu_N$ is the probability measure on $[-1,1]$ with density proportional to $(1-x^2)^{N/2-1}$.

  \item Let $\Xdm$ be an $\RCD(1,\infty)$ space with $\mm(\X)=1$.  If $u\in W^{1,2}(\X)$ satisfies
$$\int{u}\,\d \mm = 0; \quad \int u^2\d \mm = 1;\quad  \int{|\nabla u|^2\d \mm} \leq 1+\eps$$
   for some $\eps>0$, then 
  $$W_2(u_\sharp\mm , \gamma) \leq \sqrt{2\eps},$$
  where $\gamma$ is a standard Gaussian measure on $\R$. 
  \item Let $(M,g, \mu=e^{-V}\vol_g)$ be a geodesically convex weighted smooth Riemannian manifold with, $\mu(M)=1$, $V\in C^2(M)$ positive satisfying the  $\CD(N+1,-N)$ condition for $N>1$.   If $u\in W^{1,2}(M;\mu)$ satisfies
$$\int{u}\,\d \mu = 0; \quad \int u^2\d \mu = \frac{1}{N-1};\quad  \int{|\nabla u|^2\d \mu} \leq \frac{N(1+\eps)}{N-1}$$
   for some $\eps>0$, then 
  $$W_2(u_\sharp\mm , \nu_{-N}) \leq \sqrt{\frac {24\,\eps}{((N-1)\wedge  1)^3}},$$
  where $\nu_{-N}$ is the probability measure on $\rr$ with density proportional to $(1+x^2)^{-N/2-1}$. 
\end{enumerate}
\end{theorem}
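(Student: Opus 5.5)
We follow the same scheme as for Theorems \ref{thm:main result sobolev} and \ref{thm:main result log sobolev}: we rearrange $u$ onto the relevant model space, prove stability there, and transfer back with Lemma \ref{lem:le trick}. In each case let $(I,\omega)$ be the model space: $I_N$ in i), $I_\infty$ in ii), $I_{-N}$ in iii). Let $u^*$ be the decreasing rearrangement of $u$ with respect to $\omega$. Theorem \ref{thm:polya}, combined with the isoperimetric inequalities of Theorems \ref{thm:isop N}, \ref{thm:isop inf} and \ref{thm:isop -N}, gives $\int |(u^*)'|^2\d\omega\le \int|\nabla u|^2\d\mm$. By \eqref{eq:composition rearr G}, the rearrangement preserves $\int u$ and $\int u^2$. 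Hence $u^*$ satisfies the same three constraints on the model space. By Lemma \ref{lem:le trick}, $u_\sharp\mm=u^*_\sharp\omega$. Moreover $\nu_N=(-\cos)_\sharp\mm_N$, $\gamma=({\rm id})_\sharp\gamma$ and $\nu_{-N}=(\sinh)_\sharp\mm_{-N}$, so using the coupling $(u^*,\phi)_\sharp\omega$ gives
\[
W_2(u_\sharp\mm,\nu)\le \|u^*-\phi\|_{L^2(\omega)},
\]
where $\phi$ is the normalized first eigenfunction of the model. The sign of $\phi$ is chosen monotone non-increasing, like $u^*$, so that this coupling is in fact optimal; its sign is irrelevant anyway since $\nu$ is symmetric. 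It therefore remains to bound $\|u^*-\phi\|_{L^2(\omega)}$ by the deficit on the one-dimensional model.

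\textbf{Step 1: the one-dimensional estimate.} The basic estimate is spectral. Write $v=u^*$ and expand $v=c\phi+w$ in the $L^2(\omega)$-orthogonal eigenbasis of the model's Neumann Laplacian, with $w\perp 1,\phi$. Then
\[
\int (v')^2\d\omega\ge \lambda_1 c^2\|\phi\|^2+\lambda_2\|w\|^2 .
\]
In i), $\lambda_1=N$ and $\lambda_2=2(N+1)$ (Gegenbauer polynomials). In ii), $\lambda_1=1$ and $\lambda_2=2$ (Hermite polynomials). In iii), the $\mm_{-N}$ weighted operator has $\lambda_1=N$, a second eigenvalue of order $2(N-1)$, and possibly essential spectrum. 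Set $\|\phi\|^2=\|v\|^2=m$, where $m=\tfrac1{N+1}$, $1$, $\tfrac1{N-1}$ respectively. Then $\|w\|^2=m-c^2m$, and the energy hypothesis $\int(v')^2\le(1+\eps)\lambda_1 m$ gives
\[
(\lambda_2-\lambda_1)(1-c^2)m\le \eps\lambda_1 m .
\]
We then choose the sign of $\phi$ so that $c\ge0$, which is automatic since both $v$ and $\phi$ are non-increasing. This gives
\[
\|v-\phi\|^2=2m(1-c)\le 2m(1-c^2)\le \frac{2\eps\lambda_1 m}{\lambda_2-\lambda_1}.
\]

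\textbf{Step 2: the constants.} In i) this reads $\frac{2\eps N}{(N+1)(N+2)}\le\frac{2\eps}{N+1}$. In ii) it reads $2\eps$. Both are exactly the claimed bounds.

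\textbf{Main obstacle.} We expect iii) to be the hard step. On $(\rr,\cosh^{-N-1}\d t)$ the spectrum above $\lambda_1=N$ need not be discrete. The conjugated Schrödinger operator has potential tending to $(N+1)^2/4$, and for $N$ close to $1$ the essential spectrum threshold $(N+1)^2/4$ lies close to $N$, giving a gap of size $(N-1)^2/4$. We would first try to identify explicitly the spectral gap above $N$ restricted to mean-zero functions orthogonal to $\sinh$. For example, we would test the next polynomial-type eigenfunctions $\sinh^k$-combinations, and otherwise use a Hardy/Poincaré-type inequality for the conjugated operator to show a gap of at least $c((N-1)\wedge1)^2$. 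We also need to track $\|\sinh\|_{L^2(\mm_{-N})}^2=\frac1{N-1}$. Note that it is only finite for $N>1$, with a further factor of $(N-1)$ degeneration in the normalization. Together these should account for the $((N-1)\wedge1)^{-3}$ and the constant $24$. If $\sinh$ is not in the appropriate domain near $N=1$, we would truncate and pass to the limit.
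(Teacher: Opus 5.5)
Your treatment of i) and ii) is correct and is essentially the paper's argument. You rearrange onto the model space, apply the P\'olya--Szeg\H{o} inequality via the isoperimetric inequality, use a one-dimensional spectral stability estimate, and transfer back with Lemma~\ref{lem:le trick} and the coupling $(u^*,\phi)_\sharp\omega$. Your unified bound $\|v-\phi\|^2\le \frac{2\eps\lambda_1 m}{\lambda_2-\lambda_1}$ reproduces exactly $\frac{2N\eps}{(N+1)(N+2)}$ and $2\eps$. The sign claim $c\ge 0$ does follow from Chebyshev's association inequality, although the symmetry of $\nu$ already makes it unnecessary.

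The gap is in iii). There you never establish the one quantity your Step 1 needs: the bottom $\lambda_2$ of the spectrum of $-\Delta_{\mm_{-N}}$ restricted to $\{1,\sinh\}^\perp$. The routes you sketch cannot supply it.
\begin{itemize}
\item Testing $\sinh^k$-type functions only exhibits points of the spectrum, or gives upper bounds. It never yields a lower bound on what lies above $N$.
\item For $1<N\le 3$ there is no further $L^2$ eigenfunction at all; for instance $\sinh^2-\frac1{N-1}\notin L^2(\mm_{-N})$. So the statement ``a second eigenvalue of order $2(N-1)$'' is false in that range.
\item Invoking a Hardy/Poincar\'e inequality without proof merely restates the claim.
\item Your expansion ``in the eigenbasis'' is not available, since there is essential spectrum. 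It must be replaced by the spectral theorem.
\end{itemize}
Without an identified gap $\delta$, neither the exponent $3$ nor the constant $24$ is actually derived.

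The missing idea is the ground-state conjugation used in the paper. Set $g=c_{-N}^{-1}\cosh^{-N-1}$ and $v=\sqrt g f$. Then $-\Delta_{\mm_{-N}}f=g^{-1}(-v''+Vv)$ with $V(t)=\frac{(N+1)^2}{4}-\frac{(N+1)(N+3)}{4\cosh^2 t}$. This is a P\"oschl--Teller operator with parameter $s=\frac{N+1}{2}$, whose spectrum is explicitly known:
\begin{itemize}
\item simple eigenvalues $k(N+1-k)$ for integers $0\le k<\frac{N+1}{2}$, namely $0$ (ground state $\sqrt g$), then $N$, then $2(N-1)$ only when $N>3$;
\item essential spectrum $[\frac{(N+1)^2}{4},\infty)$.
\end{itemize}
Hence the gap above $N$ is $\delta=\frac{(N-1)^2}{4}$ for $1<N\le 3$ and $\delta=N-2$ for $N\ge 3$.

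Since $\int f\,\d\mm_{-N}=0$ means $v\perp\sqrt g$, the spectral theorem gives $\langle Lv,v\rangle\ge N a_1^2+(N+\delta)\|v^\perp\|^2$. Your formula then yields $W_2^2\le \frac{2N\eps}{(N-1)\delta}$. An elementary check shows this is at most $\frac{16\eps}{((N-1)\wedge 1)^3}\le\frac{24\eps}{((N-1)\wedge1)^3}$.

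Your truncation worry is unfounded: $\sinh\in W^{1,2}(I_{-N})$ for every $N>1$, because $\int\cosh^{1-N}\,\d t<\infty$.
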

Let us compare the above result with the existing literature.  A quantitative stability results for the spectral gap in essentially non-branching  $\CD(N-1,N)$ spaces for $N>1$  was obtained in \cite{CMS23} by showing $\|u-\sqrt{N+1}\cos(\sfd(x_0,\cdot))\|_{L^2(\mm)} \le C(N)\eps^\frac{1}{6N+4}.$  Additionally, a  very similar result to Theorem \ref{thm: main result spectral gap} was obtained in \cite{FGS24}. The comparison with our  result in this case is somewhat subtle.  For $\RCD(N-1,N)$ spaces with $N>1$ i) they show  that if $u$ \textit{is an eigenfunction} with eigenvalue $\lambda \leq N + \eps$, then
\begin{equation}\label{eq:w1 estimate}
    W_1(u_\sharp\mm , \nu_N) \leq C(N)\eps.
\end{equation}
 At first glance this is a better rate than ours, since Theorem \ref{thm: main result spectral gap} combined with the H\"older inequality  would imply \eqref{eq:w1 estimate} only with $\sqrt{\eps}$. The same rate was obtained for $\CD(N+1,-N)$ manifolds and $\RCD(1,\infty)$ spaces (up to a $\log(1/\eps)$ factor).  However, the assumption that $u$ is exactly an eigenfunction in \cite{FGS24} is much stronger than simply assuming the Rayleigh quotient to be close to the spectral gap, as we do here. As  a matter of fact, it turns out that  the rate $\sqrt{\eps}$ in Theorem \ref{thm: main result spectral gap} is sharp for all cases i),ii) and iii). For case i) we can take the round sphere $\mathbb S^N$ and $u$ the form $\sqrt{1-\eps}u_1 + \sqrt{\eps}u_2$ where $u_1$ is the first non-trivial eigenfunction and $u_2$ any function orthogonal to $u_1.$ Note that  this choice is ruled out by the assumptions of \cite{FGS24}.  Similar examples can be built for the infinite and negative dimensional cases ii) and iii). Finally we note  that the results in \cite{FGS24} are for $\RCD$ spaces, while ours apply also to the broader class of essentially non-branching $\CD$ spaces.

The proof of Theorem \ref{thm: main result spectral gap}  follows the same strategy as Theorem \ref{thm:main result sobolev}. We will  first obtain quantitative stability in the model spaces and then apply the rearrangement tools in Section \ref{sec:rearrangement}. Functional stability in the model space amounts to the spectral gap being an isolated eigenvalue of the Laplacian of the model space.

\begin{theorem}[Spectral-gap stability on the $I_N$ model space]\label{thm:1-d stability gap N}
    Fix $N>1$. Then for all $f \in W^{1,2}(I_N)$ with $\|f\|_{L^2(\mm_N)}^2=\frac{1}{N+1}$ and $\int f\d \mm_N=0$  it holds
    \begin{equation}\label{eq:1-d stability gap N}
       \min \left\{ \|f-\cos(t)\|_{L^2(\mm_N)}^2,\|f+\cos(t)\|_{L^2(\mm_N)}^2\right\}\le \frac{2N}{(N+2)(N+1)}\left(\frac1N\frac{\int (f')^2\d \mm_N}{\int f^2 \d \mm_N}-1\right).
    \end{equation}
\end{theorem}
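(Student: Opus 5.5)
The plan is to expand $f$ in the eigenbasis of the weighted Laplacian of $I_N$, and then play the spectral gap against the gap to the second nonzero eigenvalue.

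\textbf{Step 1: the spectrum of $I_N$.} The operator is $Lf = f'' + (N-1)\cot(t) f'$. Under $x=\cos t$ its eigenfunctions are the Gegenbauer (ultraspherical) polynomials $P_k(\cos t)$, with eigenvalues $\lambda_k = k(k+N-1)$, $k\ge 0$. These form a complete orthogonal system in $L^2(\mm_N)$: for every real $N>1$ they are the orthogonal polynomials for the weight $(1-x^2)^{(N-2)/2}$ on $[-1,1]$, and polynomials are dense there. The first three eigenvalues are
\[
\lambda_0=0,\qquad \lambda_1=N \ (\text{eigenfunction } \cos t),\qquad \lambda_2=2(N+1).
\]
For $f\in W^{1,2}(I_N)$ with coefficients $f=\sum_k f_k$ we have the energy identity $\int (f')^2\,\d\mm_N=\sum_k \lambda_k\|f_k\|^2$. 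This follows by integrating by parts against each $P_k(\cos t)$; the boundary terms vanish because of the factor $\sin^{N-1}$, first for Lipschitz $f$ and then for general $f$ by density.

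\textbf{Step 2: the energy bound.} A direct computation gives $\|\cos t\|_{L^2(\mm_N)}^2=\frac{1}{N+1}$. Since $\int f\,\d\mm_N=0$, write $f=\alpha\cos t+g$ with $g$ orthogonal to both $1$ and $\cos t$. The constraint becomes
\[
\frac{\alpha^2}{N+1}+\|g\|^2=\frac{1}{N+1},
\]
and the spectral bound gives
\[
\int (f')^2\,\d\mm_N\ge \frac{N\alpha^2}{N+1}+2(N+1)\|g\|^2 .
\]
Set $\delta:=\frac{N+1}{N}\int(f')^2\,\d\mm_N-1$, which is the right-hand bracket of \eqref{eq:1-d stability gap N}. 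Substituting $\alpha^2=1-(N+1)\|g\|^2$ yields
\[
\delta\ge (N+1)\|g\|^2\Big(\tfrac{2(N+1)}{N}-1\Big)=\frac{(N+1)(N+2)}{N}\|g\|^2 ,
\]
that is, $\|g\|^2\le \frac{N\delta}{(N+1)(N+2)}$.

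\textbf{Step 3: distance to $\pm\cos t$.} Choose the sign $\pm=\mathrm{sign}(\alpha)$. Then
\[
\|f\mp\cos t\|^2=\frac{(1-|\alpha|)^2}{N+1}+\|g\|^2 .
\]
Since $|\alpha|\le1$, we have $(1-|\alpha|)^2\le 1-\alpha^2=(N+1)\|g\|^2$. Hence the distance is at most $2\|g\|^2\le\frac{2N\delta}{(N+1)(N+2)}$, which is exactly \eqref{eq:1-d stability gap N}.

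\textbf{Main obstacle.} The only delicate point is Step 1 for non-integer $N$. One needs completeness of the Gegenbauer system in $L^2(\mm_N)$ and the energy identity for all of $W^{1,2}(I_N)$, not just smooth functions. I would handle the energy identity by approximating with Lipschitz functions, using the Sobolev characterization recalled in Section 2. For integer $N$ the whole step can be bypassed by lifting to $\ss^N$ via the map $T$, which converts the problem to spherical harmonics.
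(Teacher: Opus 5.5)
Your proposal is correct and follows the paper's proof. Both expand $f$ in the eigenbasis of $I_N$, with eigenvalues $\lambda_k=k(k+N-1)$, and use the gap between $\lambda_1=N$ and $\lambda_2=2(N+1)$ to bound the component orthogonal to $\cos t$; your estimate $(1-|\alpha|)^2\le 1-\alpha^2$ is the same step as the paper's $\frac12\|f\pm\phi_1\|^2=1\pm a_1$ together with $1-a_1^2\ge 1-|a_1|$, just in the unnormalized variables, and your justification of completeness and of the energy identity for non-integer $N$ fills in what the paper only cites.
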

\begin{proof}
The argument is based on spectral analysis. The Laplacian operator $-\Delta_{\mm_N} = \frac{\d^2}{\d t^2}+(N-1)\frac{\cos(t)}{\sin(t)} \frac{\d}{\d t}$ is known to have discrete  spectrum $0=\lambda_0<\lambda_1<\lambda_2<\dots \le \lambda_k\to +\infty$ with eigenvalues given by the formula $\lambda_k = k(k+N-1)$ (see e.g.\ \cite[Section 4.2]{orthpol} after the change of variable $x=\cos(t)$). 
By scaling we can  assume $\|f\|_{L^2(\mm_N)}=1$. 
Expanding $f = \sum_{i=1}^\infty a_i \phi_i$ with an orthonormal basis of eigenfunctions we obtain
\begin{align*}
    \int |f'|^2 \d \mm_N &= \sum_{i=1}^\infty a_i^2 \lambda_i \ge a_1^2 N + \lambda_2 (1-a_1^2) = 2N + 2 - (N+2)a_1^2.
\end{align*}
Rearranging terms
\[
    \frac{1}{N}\int |f'|^2 \d \mm_N - 1 \ge \frac{N+2}{N}(1-a_1^2) \ge \frac{N+2}{N} \min\{1-a_1, 1+a_1\}.
\]
The conclusion follows from the identity $\frac{1}{2}\|f \pm \phi_1\|_{L^2(\mm_N)}^2 = 1 \pm a_1$, with $\phi_1=\sqrt{N+1}\cos(t).$
\end{proof}
Note that \eqref{eq:1-d stability gap N} is sharp, since it becomes an equality choosing $f=u_2$ the second (non trivial) renormalized eigenfunction of the Laplacian in $I_N$ relative to the eigenvalue $\lambda_2=2(N+1).$  A similar statement  appeared before in \cite[Prop.\ 3.6]{CMS23} for all $\CD(N-1,N)$  weighted intervals but with  the deficit on right hand side raised to a worst exponent (here we have one while they obtain $\min (1,2/N)$). Additionally as in \cite{CMS23}, the stability in  Theorem \ref{thm:1-d stability gap N} can be obtained for full $W^{1,2}$-norm  with the same argument and up to change the multiplying constant.

\begin{theorem}[Spectral-gap stability on the $I_\infty$ model space]\label{thm:1-d stability gap inf}
    For all $f \in W^{1,2}(I_{\infty})$ with $\|f\|_{L^2(\gamma)}=1$ and $\int f\d \gamma=0$ it holds
    \begin{equation}\label{eq:1-d stability gap inf}
        \min \left\{\|f(t)-t\|_{L^2(\gamma)}^2,\|f(t)+t\|_{L^2(\gamma)}^2\right\}\le2 \left({\int |f'|^2\d \gamma}-1\right).
    \end{equation}
\end{theorem}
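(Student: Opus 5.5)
The plan mirrors the proof of Theorem \ref{thm:1-d stability gap N}, replacing the spherical spectral decomposition with the Hermite decomposition of the Ornstein--Uhlenbeck operator on $I_\infty$. The operator $-\Delta_\gamma = -\frac{\d^2}{\d t^2}+t\frac{\d}{\d t}$ acting on $L^2(\gamma)$ has discrete spectrum $\lambda_k=k$, $k\in\N$. The normalized Hermite polynomials $(H_k)_{k\ge 0}$ form an orthonormal basis of eigenfunctions, with $H_0=1$ and $H_1(t)=t$. Moreover, for every $f\in W^{1,2}(I_\infty)$ we have
\[
\int |f'|^2\,\d\gamma=\sum_k k\,a_k^2, \qquad a_k\coloneqq \int fH_k\,\d\gamma.
\]
I will quote this identity as standard (see \cite{BakryGentilLedoux14}). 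If one wants to justify it directly, it follows from density of polynomials in $W^{1,2}(\gamma)$ together with the relation $H_k'=\sqrt{k}\,H_{k-1}$, which makes $(f')$ have Hermite coefficients $\sqrt{k}\,a_k$.

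Now take $f$ with $\int f\,\d\gamma=0$ and $\|f\|_{L^2(\gamma)}=1$, and expand $f=\sum_{k\ge1}a_kH_k$; there is no $H_0$ term because $f$ has zero mean, and $\sum_{k\ge 1} a_k^2=1$. Bounding every eigenvalue with $k\ge 2$ from below by $\lambda_2=2$ gives
\[
\int |f'|^2\,\d\gamma \ge a_1^2+2(1-a_1^2)=2-a_1^2.
\]
Hence
\[
\int |f'|^2\,\d\gamma-1\ge 1-a_1^2=(1-a_1)(1+a_1)\ge \min\{1-a_1,1+a_1\},
\]
where the last step uses $|a_1|\le 1$, so that the larger of the two factors is at least $1$ and the smaller one is non-negative.

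To finish, I compare with the two candidate limits $\pm t$. Since $\|f\|_{L^2(\gamma)}=\|t\|_{L^2(\gamma)}=1$ and $\int f\,t\,\d\gamma=a_1$, expanding the square gives
\[
\|f\mp t\|^2_{L^2(\gamma)}=2(1\mp a_1).
\]
Taking the smaller of the two and combining with the previous inequality yields \eqref{eq:1-d stability gap inf} with constant $2$. Equality is attained at $f=H_2$, for which $a_1=0$, both distances equal $2$, and $\int|f'|^2\,\d\gamma-1=1$.

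The only step that is not pure algebra is the spectral identity for the Dirichlet energy on all of $W^{1,2}(I_\infty)$, together with completeness of the Hermite basis. I expect this to be the main obstacle, but only a mild one, since both facts are classical. Note also that if the energy of $f$ is infinite the inequality is trivial, so there is no loss in restricting to $W^{1,2}(I_\infty)$.
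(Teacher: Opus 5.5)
Your proof is correct and is essentially the paper's argument: expand in the normalized Hermite basis, bound every mode $k\ge 2$ below by $\lambda_2=2$ to get $\int|f'|^2\,\d\gamma-1\ge 1-a_1^2\ge\min\{1-a_1,1+a_1\}$, and conclude with $\|f\mp t\|_{L^2(\gamma)}^2=2(1\mp a_1)$. Your justification of the energy identity via $H_k'=\sqrt{k}\,H_{k-1}$ and your equality case $f=H_2$ are also right.
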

\begin{proof}
    The argument is the same as in Theorem \ref{thm:1-d stability gap N}, using that $\lambda_1=1$ and $\lambda_2=2$. Indeed the Laplacian operator $\Delta_\infty$ in $I_\infty$ is  $\Delta_\infty (u)=u''-tu$ and thus its eigenfunctions are the standard Hermite polynomials with eigenvalues $\lambda_n=n.$
\end{proof}
\begin{theorem}[Spectral-gap stability on the $I_{-N}$ model space]\label{thm:1-d stability gap -N}
    For all $f \in W^{1,2}(I_{-N})$ with $\|f\|_{L^2(\mm_{-N})}^2=\frac{1}{N-1}$ and $\int f\d \mm_{-N}=0$ it holds
    \begin{equation}\label{eq:1-d stability gap -N}
        \min \left\{\|f(t)-\sinh(t)\|_{L^2(\mm_{-N})}^2,\|f(t)+\sinh(t)\|_{L^2(\mm_{-N})}^2\right\}\le \frac {16}{((N-1)\wedge  1)^3} \left(\frac{{\int |f'|^2\d \mm_{-N}}}{N}-1\right).
    \end{equation}
\end{theorem}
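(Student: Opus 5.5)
The plan is to copy the spectral argument of Theorem \ref{thm:1-d stability gap N}. The one genuinely new input is a quantitative lower bound on the part of the spectrum of the weighted Laplacian on $I_{-N}$ that lies above the first eigenvalue. The generator is
\[
Lu = u'' - (N+1)\tanh(t)\,u',
\]
and it is self-adjoint on $L^2(\mm_{-N})$ with Dirichlet form $\int (u')^2\d\mm_{-N}$. A direct computation gives
\[
L\sinh = \sinh - (N+1)\tanh\cosh = -N\sinh .
\]
Since $\int \sinh^2\d\mm_{-N}=\frac{1}{N-1}$, the function $\phi_1\coloneqq\sqrt{N-1}\,\sinh$ is a normalized eigenfunction with eigenvalue $\lambda_1=N$. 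Constants are the eigenfunctions for $\lambda_0=0$.

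\textbf{Step 1: spectral bound.} I will show that
\begin{equation*}
\Lambda\coloneqq\inf\Big\{\tfrac{\int (g')^2\d\mm_{-N}}{\int g^2\d\mm_{-N}} \ :\ g\perp 1,\ g\perp\sinh\Big\}\ \ge\ N+\delta_N,\qquad \delta_N\coloneqq \min\Big\{N-2,\ \tfrac{(N-1)^2}{4}\Big\}>0 .
\end{equation*}
The tool is the ground-state (Liouville) transform $u=w\,\cosh^{(N+1)/2}$. It maps $L^2(\mm_{-N})$ unitarily (up to a constant) onto $L^2(\Leb 1)$, and it conjugates $-L$ to the Schr\"odinger operator
\[
-w''+\Big(\tfrac{(N+1)^2}{4}-\tfrac{(N+1)(N+3)}{4}\,{\rm sech}^2 t\Big)w .
\]
This is a Pöschl--Teller potential, whose spectrum is explicitly known (Landau--Lifshitz, or the Romanovski polynomial literature):
\begin{itemize}
\item bound states with eigenvalues $\lambda_k=k(N+1-k)$ for $0\le k<\frac{N+1}{2}$;
\item essential spectrum equal to $[\frac{(N+1)^2}{4},\infty)$.
\end{itemize}
One can check $\lambda_2$ directly: $L(\sinh^2+c)=2-2N\sinh^2$ for a suitable $c$, and this function lies in $L^2$ exactly when $N>3$.

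Hence the bottom of the spectrum restricted to $\{1,\sinh\}^\perp$ is $\min\{2(N-1),\frac{(N+1)^2}{4}\}$ when $N>3$, and $\frac{(N+1)^2}{4}$ otherwise. Subtracting $N$ gives the stated $\delta_N$. In all cases $\delta_N\ge c\,((N-1)\wedge 1)^2$ for an absolute $c$, e.g. $c=1/4$: for $N\ge 3$ the minimum is at least $1$, and for $N<3$ it equals $(N-1)^2/4$.

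\textbf{Step 2: reduction.} Scale $f$ so that $\|f\|_{L^2(\mm_{-N})}=1$ and write $f=a\phi_1+g$ with $g\perp 1,\phi_1$. Step 1 gives
\[
\int (f')^2 \ \ge\ Na^2+(N+\delta_N)(1-a^2),
\qquad\text{so}\qquad
\frac{1}{N}\int (f')^2-1\ \ge\ \frac{\delta_N}{N}(1-a^2).
\]
As in Theorem \ref{thm:1-d stability gap N}, $\frac12\|f\pm\phi_1\|^2=1\pm a$, and $\min\{1-a,1+a\}\le 1-a^2$ since $|a|\le1$. Therefore
\[
\min_{\pm}\|f\pm\phi_1\|^2\ \le\ \frac{2N}{\delta_N}\Big(\frac1N\int (f')^2-1\Big).
\]

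\textbf{Step 3: undo the scaling and bound the constant.} Returning to $\|f\|^2=\frac1{N-1}$ multiplies the left-hand side by $\frac1{N-1}$, which turns $\phi_1$ back into $\sinh$. This gives the constant $\frac{2N}{(N-1)\delta_N}$, and it remains to check that it is at most $\frac{16}{((N-1)\wedge1)^3}$:
\begin{itemize}
\item For $1<N\le 2$: $\delta_N=(N-1)^2/4$, so the constant is $8N/(N-1)^3\le 16/(N-1)^3$.
\item For $2<N<3$: $\delta_N\ge 1/4$, and $\frac{2N}{N-1}\le 4$, so the constant is at most $16$.
\item For $N\ge 3$: $\delta_N\ge1$, and $\frac{2N}{N-1}\le 3\le16$.
\end{itemize}

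\textbf{Main obstacle.} The only nontrivial step is Step 1, namely justifying rigorously that the spectrum of $-L$ on $\{1,\sinh\}^\perp$ starts at $N+\delta_N$. Two points need care. First, the transformed operator must be shown to be the Friedrichs realization on the whole line; this holds because $V$ is bounded, so the Dirichlet form on $W^{1,2}(I_{-N})$ matches the Schr\"odinger form. Second, one must confirm that no eigenvalue other than the listed $\lambda_k$ lies below the essential spectrum. I would first try to cite the explicit Pöschl--Teller spectrum. As a self-contained fallback for $N\le 3$, I would use a Hardy/Sturm-comparison argument: an eigenfunction orthogonal to $1$ and $\sinh$ must have two sign changes, and Sturm--Liouville oscillation theory then places its eigenvalue above $\lambda_1$ and hence, since no other bound state exists there, above the threshold $\frac{(N+1)^2}{4}$.
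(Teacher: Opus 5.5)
Your proposal is correct and is essentially the paper's proof. It uses the same ground-state transform $v=\sqrt{g}\,f$ to the P\"oschl--Teller operator with potential $\frac{(N+1)^2}{4}-\frac{(N+1)(N+3)}{4\cosh^2 t}$, the same gap $\delta$ above the eigenvalue $N$ ($\frac{(N-1)^2}{4}$ for $N\le 3$, $N-2$ for $N\ge 3$), and the same two-level decomposition as on $I_N$, giving the constant $\frac{2N}{\delta(N-1)}\le \frac{16}{((N-1)\wedge 1)^3}$, which you check more explicitly than the paper.

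A few slips to fix. Your displayed $\delta_N=\min\{N-2,\frac{(N-1)^2}{4}\}$ always equals $N-2$, which is nonpositive for $N\le 2$, so replace it by the piecewise value you actually use in Step 3. Also $L\sinh^2=2-2(N-1)\sinh^2$, so $\sinh^2-\frac{1}{N-1}$ is the eigenfunction for $2(N-1)$. Finally, like the paper, your scaling argument proves the bound with the scale-invariant deficit $\frac1N\frac{\int (f')^2\d\mm_{-N}}{\int f^2\d\mm_{-N}}-1=\frac{N-1}{N}\int (f')^2\d\mm_{-N}-1$, which is the form used in Theorem~\ref{thm: main result spectral gap}~iii). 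It does not prove the literal $\frac1N\int (f')^2\d\mm_{-N}-1$, and that quantity is negative for $f=\sinh$ when $N>2$.
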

\begin{proof}
The argument follows again Theorem \ref{thm:1-d stability gap N}. By density, it suffices to prove the statement for $f \in C^\infty_c(\rr)$ with $\|f\|_{L^2(\mm_{-N})}=1$. Set $g \coloneqq (c_{-N})^{-1}\cosh(t)^{-(N+1)}$ and $v =\sqrt g f $. A direct computation yields:
\begin{equation}\label{eq:laplacian change of variable}
-\Delta_{\mm_{-N}} f = \left( -v'' + V(t) v \right) g^{-1}, \quad \text{where } V(t) = \frac{(N+1)^2}{4} - \frac{(N+1)(N+3)}{4\cosh^2(t)}.
\end{equation}
The operator $L = -\frac{\d^2}{\d t^2} + V(t)$ is a shifted Schr\"odinger operator. Its spectral properties are standard (see \cite[Pag.\ 94]{flugge}, \cite[Theorem XIII.15]{reedsimon}): $L$ is self-adjoint on $H^2(\rr)$ with bottom eigenvalue $\lambda_1(L)=N$ and essential spectrum $\sigma_{ess}(L) = [\frac{(N+1)^2}{4}, \infty)$.   Observe that
\[
\delta \coloneqq d(N, \sigma(L) \setminus \{N\}) = \begin{cases} \frac{(N-1)^2}{4}, & 1<N<3, \\ |N-2|, & N\ge 3. \end{cases}
\]
From \eqref{eq:laplacian change of variable} we have $E_{\lambda_1}={\rm span}\{\phi_1\},$ with $\phi_1=\sqrt{N-1}\sqrt{g}\sinh(t)$, $\|\phi_1\|_{L^2(\rr)}=1.$ Decompose $v = a_1 \phi_1 + v^\perp$. Applying the spectral theorem \cite[Theorem 9.38]{hall} for $L$
\begin{align*}
 \int_{\rr} |f'|^2 \d\mm_{-N} &=- \int f(\Delta_{\mm_{-N}}f )\,g(t) \d t= \la L v, v \ra_{L^2(\rr)} = \int_{\sigma(L)} \lambda \, \d\mu_v(\lambda) \\
&\ge N \|a_1\phi_1\|_{L^2(\rr)}^2 + (N+\delta)\|v^\perp\|_{L^2(\rr)}^2 =N + \delta - \delta a_1^2.
\end{align*}
We conclude as in Theorem \ref{thm:1-d stability gap N}, using that $1 \pm a_1 = \frac{1}{2} \|f \pm \sqrt{N-1}\sinh(t)\|^2_{L^2(\mm_{-N})}$. Note also that by direct computation $\frac{2N}{\delta(N-1)}\le \frac {16}{((N-1)\wedge  1)^3}$.
\end{proof}

\begin{proof}[Proof of Theorem \ref{thm: main result spectral gap}]
We consider only case i), the others are completely analogous.
Let $u$ be as in the assumptions.
    Consider its rearrangement $u^*$ in the model space $I_N$, as in Section \ref{sec:rearrangement}. Then $\|u^*\|^2_{L^2(\mm_N)}=(N+1)^{-1}$. Applying Theorem \ref{thm:polya} combined with the isoperimetric inequality in Theorem \ref{thm:isop N} yields
    \begin{equation}
      \frac1N\frac{\int |(u^*)'|^2 \d \mm_N}{\int (u^*)^2\d \mm_N}-1 \le   \frac1N\frac{\int |\nabla u|^2 \d \mm}{\int u^2\d \mm} -1\le \eps.
    \end{equation}
    Therefore by Theorem \ref{thm:1-d stability gap N} 
    \begin{equation}
          \min \left\{ \|u-\cos(t)\|_{L^2(\mm_N)}^2,\|u+\cos(t)\|_{L^2(\mm_N)}^2\right\}\le \frac{2N}{(N+2)(N+1)}\eps.
    \end{equation}
    Next we note that $(\cos(t))_\sharp\mm_N=(-\cos(t))_\sharp\mm_N =\nu_N$.
    Hence as before, using  Lemma \ref{lem:le trick}  we obtain
    $$W_2^2(u_\sharp\mm , \nu_N)\le \frac{2N}{(N+2)(N+1)}\eps.$$ 
\end{proof}

	\textbf{{Notations and  objects used in the note}}:
     A \textit{metric measure space} $\Xdm$ is a triple where $(\X,\sfd)$ is a complete and separable metric space and $\mm$ is non-negative Borel probability measure. Given a Borel map $T:(\X,\sfd_\X)\to (Y,\sfd_Y)$ between two metric spaces and a non-negative Borel measure $\mu$ in $\X$, we denote by $T_\sharp\mu$ the push-forward measure defined by $T_\sharp\mu(E)\coloneqq \mu(T^{-1}(E))$.
     $P_p(\X)$, $p\in [1,\infty),$ denotes the space of probability measures with finite $p$-th moment in $(\X,\sfd).$ $W_p: P_p(\X)\times P_p(\X) \to [0,\infty)$ denotes the $p$-Wasserstein distance. $\LIP(\X)$ denotes the space of Lipschitz functions on $\X.$ $\sfd_e$ denotes the Euclidean distance in $\rr$ and $\mathcal L^1$ the one-dimensional Lebesgue measure.

\medskip

	\textbf{{Acknowledgments}}  The first author was supported by the Agence Nationale de la Recherche (ANR) Grant ANR-23-CE40-0003 (Project CONVIVIALITY). The second author is funded by the European Union (ERC, ConFine,
101078057). We wish to thank Ivan Gentil and Jean Dolbeault for useful discussions about stability for Sobolev inequalities and Francesco Nobili for valuable comments on a preliminary draft of the manuscript. 

\medskip

\textbf{{Competing interests}} 

The authors have no competing interests to declare that are relevant to the content of this article.

\end{document}